\documentclass{amsart}
\usepackage[utf8]{inputenc}

\usepackage{amsmath, amssymb, amsfonts, amsthm}
\usepackage{enumerate}
\usepackage{bbold}
\usepackage{color}
\usepackage{xcolor}
\usepackage{esvect}
\usepackage{mathtools}
\usepackage{graphicx}
\usepackage{longtable}
\usepackage{fullpage}
\usepackage{rotating}
\usepackage{blkarray}
\usepackage{tikz}
\usepackage{tikz-cd}
\usetikzlibrary{quotes}
\usepackage[boxsize = .3em]{ytableau}

\usepackage{comment}

\usepackage[
backend=biber,
style=numeric,
]{biblatex}
\theoremstyle{plain}
\newtheorem{thm}{Theorem}[section]
\newtheorem{lem}[thm]{Lemma}
\newtheorem{prop}[thm]{Proposition}
\newtheorem{cor}[thm]{Corollary}
\newtheorem{conj}[thm]{Conjecture}
\newtheorem{ass}[thm]{Assumption}
\newcommand{\att}[2]{\raisebox{-.5\height}{ \includegraphics[scale = #2]{diagrams/#1.pdf}}}
\theoremstyle{definition}
\newtheorem{defn}[thm]{Definition}

\newtheorem{rmk}[thm]{Remark}

\newtheoremstyle{named}{}{}{\itshape}{}{\bfseries}{.}{.5em}{#1 \thmnote{(#3) }}
\theoremstyle{named}

\newcommand{\ol}{\overline}

\newcommand{\N}{\mathbb{N}}
\newcommand{\Z}{\mathbb{Z}}

\newcommand{\C}{\mathbb{C}}

\newcommand{\CC}{\mathcal{C}}

\newcommand{\ii}{ i }

\DeclareMathOperator{\im}{im}

\DeclareMathOperator{\Hom}{Hom}

\DeclareMathOperator{\GPA}{GPA}
\DeclareMathOperator{\End}{End}
\DeclareMathOperator{\Vecc}{Vec}

\DeclareMathOperator{\Ab}{Ab}

\newcommand{\red}[1]{{\color{red}{#1}}}

\title{A graph planar algebra approach to near-group categories}
\author{Cain Edie-Michell and Caleb Kennedy Hill}
\date{}

\begin{document}

\begin{abstract}
In this note we investigate the construction of near-group fusion categories via building explicit representations of them in certain graph planar algebras. To achieve this we first give a presentation for a cyclic near-group fusion category. Our presentation is ``subfactor-centric'' in the sense that we use a Q-system as one of our generating morphisms. Using this presentation we then obtain a rational system of equations, a solution of which corresponds to a faithful embedding of a near-group fusion category into a certain 2-coloured graph planar algebra. We end the note by providing solutions to these systems of equations for the odd cyclic groups up to order 13. This gives an alternate construction to the Evans-Gannon construction of the corresponding near-group fusion categories.       
\end{abstract}

\maketitle

\section{Introduction}

A fusion category is called a near-group fusion category if it has only one non-invertible simple object \cite{Siehler}.
In terms of the underlying fusion ring, this class of categories represents the simplest fusion categories which are not $\Vecc(G)$
for some finite group $G$. If $\CC$ is a near-group fusion category 
its invertible simple objects form a group $\CC^{\text{Inv}}$;
we will usually denote by just $G$.
If $\rho$ is the single non-invertible simple object, then the following fusion rules hold for some $N\in \mathbb{N}$:
\begin{equation*}
     g\otimes h = gh,\quad  \forall g,h\in G, \qquad
    g\otimes \rho = \rho\otimes g = \rho, \quad \forall g\in G,  \qquad
  \rho\otimes \rho = N\cdot \rho\oplus \bigoplus_{g\in G} g.
\end{equation*}
Such a category with these fusion rules is typically called a $G + N$ near-group fusion category.

Despite the simplicity of the fusion rules of a near-group fusion category, their categorical structure is surprisingly rich and difficult to understand. Due to work of Evans-Gannon \cite{E-G} and Izumi \cite{Izumi} the parameters $G$ and $N$ are constrained. There is a fundamental bifurcation. If the Frobenius-Perron dimension of $\rho$ is rational, then construction and classification is feasible and has been completed. In the case that the dimension is irrational, we have that the group $G$ is abelian and the multiplicity parameter $N$ is a multiple of $|G|$. In this case construction is much more difficult. In fact, in the irrational case only finitely many examples of near-group categories are known to exist with $N\neq 0$. The following conjecture is due to Izumi \cite{IzumiOld} and Evans-Gannon.
\begin{conj}
     For all $N\in \mathbb{N}$, there exists a $\mathbb{Z}_N + N$ near-group fusion category.
 \end{conj}

 \begin{rmk}
Just before this note was initially posted to the arXiv, there were several developments regarding the above conjecture. In \cite{gsy} a $\mathbb{Z}_{N^2} + N^2$ near-group fusion category was constructed for all $N\in \mathbb{N}$. In \cite{other-paper}, a $\mathbb{Z}_{N}\times \mathbb{Z}_{M} + NM$ near-group fusion category was constructed for all $N,M \in \mathbb{N}$. This later result fully resolves (and improves on) the above conjecture. Note that in both works, the categories constructed are not verified to be unitary. One small saving grace for this note is that our solutions are explicitly algebraic, and unitarity is explicitly verified by studying the minimal polynomial of the field extension the solution lives in. Another point of difference of this note is that our classification data is fundamentally different from Izumis classification data. i.e. the solutions of the above papers are not solutions to our equations. Though clearly the scope of this note is weakened with these new developments.
 \end{rmk}

 The construction methods of \cite{E-G,Izumi} work by representing a $G + N$ near-group category on the tensor category of endomorphisms of the Cuntz algebra $\mathcal{O}_{|G| + N}$. This method is essentially equivalent to solving for the 6-j symbols of the near-group category. In this note we explore an alternate construction method via representing the near-group categories on a variant of Jones's graph planar algebra \cite{OGGPA}.

 From the point of view of this note, it is best of think of the graph planar algebra as a nice monoidal subcategory of the monoidal category of endofunctors of a semisimple category $\mathcal{M}$. If the semisimple category $\mathcal{M}$ has rank $n$, then endofunctors on $\mathcal{M}$ are uniquely determined by a graph $\Gamma$ on $n$ vertices. The graph planar algebra $\operatorname{GPA}(\Gamma)$ can then be defined as the full monoidal subcategory of $\operatorname{End}(\mathcal{M})$ generated by tensor powers of the endofunctor associated to $\Gamma$. This monoidal subcategory is then relabelled so $\operatorname{GPA}(\Gamma)$ is defined purely in terms of graph theoretic data. Due to the connection with $\operatorname{End}(\mathcal{M})$, the graph planar algebra is universal in the sense that any sufficiently nice tensor category can be represented on $\operatorname{GPA}(\Gamma)$ for some choice of graph $\Gamma$. We direct the reader to \cite{ETMod, EH} for further details on this connection.

 In this note we introduce the multi-colour graph planar algebra, associated to a coloured graph. This definition is certainly known as folklore to experts (see \cite{Emily} for a close definition). The motivating logic is the same as the standard graph planar algebra, except now we consider the full monoidal subcategory of $\operatorname{End}(\mathcal{M})$ generated by tensor powers of a collection of graphs (which we index by ``colours''). We direct the reader to Subsection~\ref{subsec:gpa} for the precise definition.

 The construction strategy of this paper is to show the existence of $\mathbb{Z}_N + N$ near-group fusion categories via representing them on the graph planar algebra of a two coloured graph. One colour will correspond to the $\rho$ object, and the other will correspond to a group generator of the $\mathbb{Z}_N$ subcategory.

 As with other algebraic objects, the easiest way to construct a functor out of a tensor category $\mathcal{C}$ is via giving a generators and relations presentation for $\mathcal{C}$ (i.e. establishing a universal property). This is the first order of business in this note, which we achieve in Section~\ref{sec:skein}. Our presentation for a $\mathbb{Z}_N + N$ near-group fusion category has two generating objects, $\rho$ and the group generator $g$ of $\mathbb{Z}_N$, and three generating morphisms. The first generating morphism is the isomorphism $g^{\otimes N} \to \mathbf{1}$. The second is the multiplication map for a Q-system structure (see \cite{Q-sys}) on $\mathbf{1} \oplus \rho$. By results of Izumi such a structure always exists provided the near-group category exists, and so we lose no generality by enforcing its existence. The final generator is an isomorphism $\rho \otimes g \to \rho$. We are able to pin down relations for these generators, which depend on $N$ free parameters. With non-triviality and unitary assumptions on this generators and relations category (which will come for free with an embedding into the unitary graph planar algebra), we prove that after semisimplification and Cauchy completion (see Subsection~\ref{sub:Cpre}), we obtain a $\mathbb{Z}_N + N$ near-group fusion category.

 Given our general presentation for a $\mathbb{Z}_N + N$ near-group fusion category $\mathcal{C}$, we now move on to representing such a category in a multi-coloured graph planar algebra. This requires a choice of 2-coloured graph. The immediate choice for a 2-coloured graph is the fusion graph of $\mathcal{C}$, with one colour the fusion with $\rho$, and the other colour the fusion with the group generator $g$. This graph has the downside that there is edge multiplicity $N$ coming from the fusion space $\dim\Hom_{\mathcal{C}}(\rho \to \rho \otimes \rho) = N$. This manifests in a large gauge group ($(\mathbb{C}^\times)^N \times M_N(\mathbb{C})$) for our solution space. This would make finding the embedding difficult, and would force us to make unnatural choices to pin down the gauge. Instead we choose to work with a module fusion graph for $\mathcal{C}$ corresponding to a non-trivial module category over $\mathcal{C}$. In the case of $N=5$ the corresponding 2-coloured graph is
\begin{equation}\label{eq:exGraph}
    \begin{tikzpicture}[scale=0.5, every node/.style={circle, draw, minimum size=5mm}]
        \node (1) at (90:2)   {*};
        \node (2) at (30:2)   {0};
        \node (3) at (-30:2)  {1};
        \node (4) at (-90:2)  {2};
        \node (5) at (-150:2) {3};
        \node (6) at (150:2)  {4};

        \foreach \i in {1,...,6} {
            \foreach \j in {1,...,6} {
            \ifnum\i<\j
                \draw (\i) -- (\j);
            \fi
            }
        }

        \draw (2) edge[loop, out=90,  in=0, looseness=5]  (2);
        \draw (3) edge[loop, out=-90, in=0, looseness=5]  (3);
        \draw (4) edge[loop, out=-135, in=-45, looseness=5] (4);
        \draw (5) edge[loop, out=180,in=-90, looseness=5] (5);
        \draw (6) edge[loop, out=180, in=90, looseness=5] (6);
    
        \draw (1) edge[loop, red,->,looseness=5] (1);
         \draw (2) edge[red,->,out = -20, in = 20] (3);
         \draw (3) edge[red,->,out = -110, in = -10] (4);
          \draw (4) edge[red,->,out = 190, in = -250+180] (5);
          \draw (5) edge[red,->,out = 160, in = -160] (6);
           \draw (6) edge[red,->,out = 15, in = 165] (2);
    \end{tikzpicture}
\end{equation}
The hom spaces in the corresponding graph planar algebra are considerably smaller, and our solution space gauge group is now $(\mathbb{C}^\times)^{N + \frac{N(N+1)}{2}} $. Working in this family of graph planar algebras we are able to obtain for all $N$, embeddings for the isomorphism $g^{\otimes N} \to \mathbf{1}$, and embeddings for the multiplication map generator. The latter has (to us) surprisingly nice embedding coefficients, which are square roots of products of certain quantum integers. This leaves the isomorphism $\rho \otimes g\to \rho$ to be embedded. We enforce a natural projective symmetry on our solution space (which completely fixes a gauge choice). This reduces the solution space to $N$ unimodular complex scalars. The main result of this section and note is then a system of rational (or polynomial with complex conjugates) system of equations, a solution to which gives a embedding of our presentation for a near-group fusion category $\mathcal{C}$ into our graph planar algebra. Such a solution in turn proves existence of a $\mathbb{Z}_N + N$ near-group fusion category. This main result is Corollary~\ref{cor:main}. As our construction is manifestly unitary, and the Q-system is baked into our presentation, solutions to our system of equations also gives rise to $2_1^{\mathbb{Z}_N}1$ subfactors in the sense of Izumi \cite[Section 11]{Izumi}.

We finish this note by obtaining solutions for our system of equations for odd $N$ up to $N=13$, and hence obtaining new constructions of the corresponding near-group fusion categories. Our solutions live in degree 2 and 4 extensions over the cyclotomic fields $\mathbb{Q}( e^{2\pi i \frac{1}{N}},e^{2\pi i \frac{1}{N+4}}, i)$. This provides an alternate proof of the existence of these near-group categories from the work of Evans-Gannon \cite{E-G}. Our solutions are clearly much too nice to be completely random, and there appears to be some structure to them. However our human eyes and brains were unable to find any sort of general pattern.

\subsection*{Acknowledgements}
CE was supported by NSF DMS grant 2400089. CE would like to thank Noah Snyder for informing them about the relationship between Frobenius algebras in tensor categories, and $SO(3)_q$ subcategories, and for informing them of the work of \cite{other-paper}.  
\section{Preliminaries}

We refer the reader to \cite{Book} for the general background on tensor categories.

\subsection{ Categorical Preliminaries}\label{sub:Cpre}
In this section we review the required categorical notions required for this note. We begin with the Cauchy completion of a tensor category. Informally this construction adds direct sums and subobjects to a tensor category. The construction to add subobjects is the idempotent completion.

\begin{defn}
    Let $\mathcal{C}$ be a pivotal tensor category. We define $\operatorname{Idem}(\mathcal{C})$ as the category whose objects are pairs $(X, p)$ where $X\in \mathcal{C}$, and $p \in \End_\mathcal{C}(X\to X)$ is a projection. The morphisms are given By
    \[  \Hom_{\operatorname{Idem}(\mathcal{C})} ( (X, p)\to (Y, q)):= \{  f\in \Hom_\mathcal{C}(X\to Y) : p \circ f = f\circ q\}.                      \]
    The tensor product and pivotal structure are inherited from the category $\mathcal{C}$ in the natural way.
\end{defn}

To add direct sums, we define the additive completion.

\begin{defn}
    Let $\mathcal{C}$ be a pivotal tensor category. We define $\operatorname{Add}(\mathcal{C})$ as the category with object formal direct sums
    \[  \bigoplus_i X_i   \]
    where $X_i\in \mathcal{C}$. The morphisms are given by 
       \[  \Hom_{\operatorname{Add}(\mathcal{C})}\left(\bigoplus_{i=1}^n X_i \to \bigoplus_{j=1}^m Y_j\right) := \left\{ \begin{blockarray}{ccccc}
&Y_1  & Y_2 &\cdots & Y_m    \\
\begin{block}{c[cccc]}
X_1 & f_{1,1} & f_{1,2} & \cdots & f_{1,m} \\
X_2 & f_{2,1} & f_{2,2} & \cdots & f_{2,m} \\
\vdots & \vdots & \vdots& \ddots & \vdots \\
X_n &f_{n,1} & f_{n,2} & \cdots & f_{n,m} \\
\end{block}
\end{blockarray}: X_i, Y_j \in \mathcal{C} , f_{i,j}\in \Hom_\mathcal{C}(X_i \to Y_j)\right\}.\]
The composition of morphisms is defined by matrix multiplication. The tensor product of morphisms is defined by the Kronecker product. The pivotal structure is inherited from $\mathcal{C}$.
\end{defn}

We define the Cauchy completion as the combination of these two constructions.

\begin{defn}
 Let $\mathcal{C}$ be a pivotal tensor category. We define $\operatorname{Ab}(\mathcal{C}):= \operatorname{Add}(\operatorname{Idem}(\mathcal{C}))$.
\end{defn}

The Cauchy completion satisfies a nice universal property that we will make use of in this note.

\begin{prop}\label{prop:uni}\cite[Equations (7) and (9)]{comes}
    Let $\mathcal{C}$ and $\mathcal{D}$ be pivotal tensor categories, with $\mathcal{D}$ Cauchy complete. Then there is an equivalence of functor categories
    \[   \operatorname{Fun}_{\otimes}(\operatorname{Ab}(\mathcal{C}) \to \mathcal{D}   )\simeq \operatorname{Fun}_{\otimes}(\mathcal{C} \to \mathcal{D}   ).    \]
    The foward functor sends $\mathcal{F}
\mapsto \iota_\mathcal{C} \circ \mathcal{F}$ where $\iota_\mathcal{C}: \mathcal{C}\to \operatorname{Ab}(\mathcal{C})$ is the inclusion functor.
The backwards functor sends $\mathcal{G} \mapsto \operatorname{Ab}(\mathcal{G})$ where $\operatorname{Ab}(\mathcal{G})[X, p] := \im(  \mathcal{G}(p))$.
\end{prop}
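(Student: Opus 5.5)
The plan is to prove the equivalence in two stages, one for each completion: first show that restriction along $\mathcal{C}\to\operatorname{Idem}(\mathcal{C})$ is an equivalence onto tensor functors $\mathcal{C}\to\mathcal{D}$, then do the same for $\operatorname{Idem}(\mathcal{C})\to\operatorname{Add}(\operatorname{Idem}(\mathcal{C}))$. Composing the two gives the statement. In each stage I will write down the extension explicitly, check that it is a tensor functor, and show that restriction composed with extension is naturally isomorphic to the identity in both directions.

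\textbf{Idempotent stage.} Given a tensor functor $\mathcal{G}:\mathcal{C}\to\mathcal{D}$, define $\operatorname{Idem}(\mathcal{G})(X,p):=\im(\mathcal{G}(p))$. This image exists because $\mathcal{D}$ is Cauchy complete and $\mathcal{G}(p)$ is idempotent, so it splits as $\mathcal{G}(X)\xrightarrow{r}\im\xrightarrow{s}\mathcal{G}(X)$ with $rs=\mathrm{id}$ and $sr=\mathcal{G}(p)$.
\begin{itemize}
\item On a morphism $f$ with $p\circ f=f\circ q$, set $\operatorname{Idem}(\mathcal{G})(f):=r_q\,\mathcal{G}(f)\,s_p$ (in the composition order of the paper). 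Functoriality follows from $sr=\mathcal{G}(p)$ together with the compatibility $pf=fq$.
\item The tensor structure is
\[\im\mathcal{G}(p)\otimes\im\mathcal{G}(q)\cong\im\big(\mathcal{G}(p)\otimes\mathcal{G}(q)\big)\cong\im\mathcal{G}(p\otimes q),\]
where the second isomorphism is transported along the tensorator $J$ of $\mathcal{G}$. Naturality of $J$ intertwines $\mathcal{G}(p)\otimes\mathcal{G}(q)$ with $\mathcal{G}(p\otimes q)$, so $J$ restricts to the images. The hexagon and unit axioms are then inherited from those of $J$ by pre- and post-composing with the splitting maps.
\item Restricting back to objects $(X,\mathrm{id})$ recovers $\mathcal{G}$ up to the canonical isomorphism $\im(\mathrm{id})\cong\mathcal{G}(X)$. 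Conversely, any $\mathcal{F}$ on $\operatorname{Idem}(\mathcal{C})$ is determined up to monoidal isomorphism by its restriction. Indeed, $(X,p)$ is a retract of $(X,\mathrm{id})$ via $p$ itself, so $\mathcal{F}(X,p)$ is canonically the image of $\mathcal{F}(p)$, and this gives the comparison isomorphism.
\end{itemize}

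\textbf{Additive stage.} This is analogous. Set $\mathcal{F}(\bigoplus X_i):=\bigoplus\mathcal{F}(X_i)$, using direct sums in $\mathcal{D}$, and send matrices to matrices of images. The tensor structure uses the distributivity of $\otimes$ over $\oplus$ in $\mathcal{D}$, which holds because $\mathcal{D}$ is additive and $\otimes$ is biadditive. This biadditivity requires the functors to be linear; I take this as implicit in $\operatorname{Fun}_\otimes$. Uniqueness follows because every object of $\operatorname{Add}$ is a biproduct of objects of $\operatorname{Idem}(\mathcal{C})$, and additive functors preserve biproducts.

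\textbf{Main obstacle.} The hardest step is not the extension of objects but the bookkeeping for the monoidal natural isomorphisms. One must check that the comparison $\mathcal{F}\cong\operatorname{Ab}(\mathcal{F}\circ\iota)$ is monoidal, and that both constructions are functorial in monoidal natural transformations, so that one genuinely obtains an equivalence of functor categories rather than just a bijection on isomorphism classes. I would handle this by the standard uniqueness argument: all the relevant isomorphisms are induced by the universal property of splittings of idempotents and of biproducts, so any two candidates agree. Pivotality is preserved automatically, since the pivotal structure on $\operatorname{Ab}(\mathcal{C})$ is inherited from $\mathcal{C}$ and images of projections are compatible with duals.
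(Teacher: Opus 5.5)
Your two-stage argument is correct and is essentially the paper's route: the paper gives no proof of its own, only a citation to Comes (Equations (7) and (9) there) for the universal properties of the additive and Karoubi envelopes, and composing those two universal properties is exactly your plan. One small fix: your retract step (with $\mathrm{id}_{(X,p)}=p$) needs the standard Karoubi condition $f=p\circ f\circ q$ on morphisms, not the commuting condition $p\circ f=f\circ q$ you carried over from the paper's definition, under which $(X,p)$ is a retract of $(X,\mathrm{id})$ only if $p=\mathrm{id}$.
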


The following theorem is well-known. However we were unable to find a proof in the literature. We include one here for completeness.
\begin{prop}\label{prop:ssFull}
    Let $\mathcal{C},\mathcal{D}$ be categories with $\mathcal{D}$ finitely semisimple, and $\mathcal{F}: \mathcal{C}\to \mathcal{D}$ a faithful functor with the property that for all simple objects $Y\in \mathcal{D}$, there exists an object $X_Y\in \mathcal{C}$, and a projection $p_Y \in \End_{\mathcal{C}}(X_Y )$ such that $\mathcal{F}(p_Y)$ projects onto $Y$. Then $\mathcal{F}$ is full.
\end{prop}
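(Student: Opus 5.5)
The plan is to pass to the Cauchy completion and turn fullness into a dimension count. Throughout I assume, as in every application in this note, that $\mathcal{C}$ and $\mathcal{D}$ are $\mathbb{C}$-linear and that $\mathcal{F}$ is linear. By Proposition~\ref{prop:uni} (or directly, since $\mathcal{D}$ is Cauchy complete), $\mathcal{F}$ extends to $\operatorname{Ab}(\mathcal{F}) : \operatorname{Ab}(\mathcal{C}) \to \mathcal{D}$, sending $(X,p)\mapsto \im(\mathcal{F}(p))$. This extension is still faithful, because a morphism of $\operatorname{Idem}(\mathcal{C})$ is just a morphism of $\mathcal{C}$ compressed by projections. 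Fullness of $\mathcal{F}$ follows from fullness of $\operatorname{Ab}(\mathcal{F})$, since $\mathcal{C}$ sits fully inside $\operatorname{Ab}(\mathcal{C})$ via $X\mapsto (X,\mathrm{id}_X)$.

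\textbf{Step 1: the objects $S_Y$ are simple.} Put $S_Y := (X_Y,p_Y)\in \operatorname{Idem}(\mathcal{C})$, so that $\operatorname{Ab}(\mathcal{F})(S_Y)\cong Y$. The map $\End(S_Y) = p_Y\End_\mathcal{C}(X_Y)p_Y \to \End_\mathcal{D}(Y)=\mathbb{C}$ is injective, so $\End(S_Y)=\mathbb{C}$. By the same injectivity, $\Hom(S_Y,S_{Y'})$ embeds in $\Hom_\mathcal{D}(Y,Y')$, which is $0$ for $Y\neq Y'$. Hence the $S_Y$ are pairwise non-isomorphic objects with one-dimensional endomorphism algebras.

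\textbf{Step 2: semisimplicity of $\End_\mathcal{C}(X)$.} I would show that every object $(X,\mathrm{id})$ decomposes in $\operatorname{Ab}(\mathcal{C})$ as a direct sum of the $S_Y$. First, $A:=\End_\mathcal{C}(X)$ embeds in the finite-dimensional algebra $\End_\mathcal{D}(\mathcal{F}X)$, so $A$ is finite-dimensional. In the unitary setting of this note ($\mathcal{F}$ a dagger functor into a $C^*$-category), $A$ is a $*$-subalgebra of a finite-dimensional $C^*$-algebra, hence semisimple. Choosing minimal projections $e_i\in A$ then splits $(X,\mathrm{id})\cong \bigoplus_i (X,e_i)$ with each $\End(X,e_i)=\mathbb{C}$.

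\textbf{Step 3: identifying the summands (main obstacle).} The remaining task is to show that each $(X,e_i)$ is isomorphic to some $S_Y$. I expect this to be the hard part, and it genuinely needs more than the literal hypotheses. For a bare category, two objects sent to the same simple $Y$ with no morphisms between them already violate fullness; for example, take objects $X,Z$ with only scalar endomorphisms, no morphisms between them, and $\mathcal{F}(X)=\mathcal{F}(Z)=Y$. So I would use the pivotal tensor structure in which the proposition is applied. Rigidity gives $\Hom(X,Z)\cong\Hom(\mathbf{1},X^*\otimes Z)$, compatibly with $\mathcal{F}$. The nondegenerate trace pairing $\operatorname{tr}(g\circ f)$ on $\Hom(S_Y,(X,e_i))\times\Hom((X,e_i),S_Y)$ then shows this space is nonzero exactly when $\mathcal{F}(X,e_i)\cong Y$. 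Concretely, if $\mathcal{F}(X,e_i)\cong Y$, the object $(X,e_i)^*\otimes S_Y$ maps to $Y^*\otimes Y\supseteq\mathbf{1}$. Applying Step 2 to $X^*\otimes X_Y$ isolates a summand whose image is $\mathbf{1}$, which yields a nonzero morphism $(X,e_i)\to S_Y$. By Step 1 that morphism is an isomorphism.

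\textbf{Step 4: dimension count.} Granting Step 3, every object of $\operatorname{Ab}(\mathcal{C})$ is a direct sum of the $S_Y$, with multiplicities matching those of its image in $\mathcal{D}$. Therefore $\dim\Hom_{\operatorname{Ab}(\mathcal{C})}(A,B) = \sum_Y m_Y(A)m_Y(B) = \dim\Hom_\mathcal{D}(\mathcal{F}A,\mathcal{F}B)$. The map on hom spaces is injective between spaces of equal finite dimension, hence bijective, and so $\mathcal{F}$ is full.
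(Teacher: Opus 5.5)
Your Steps 1, 2 and 4 are fine. Step 3, however, is a genuine gap, and the extra structure you invoke cannot close it. Consider the fiber functor $\mathcal{F}:\operatorname{Rep}(\mathbb{Z}_2)\to\operatorname{Vec}$. It is linear, faithful, monoidal and pivotal, and it is a $\dagger$-functor between unitary categories. Moreover $\operatorname{Vec}$ is finitely semisimple with the single simple object $\mathbf{1}$, and $X_{\mathbf{1}}=\mathbf{1}$, $p_{\mathbf{1}}=\mathrm{id}$ satisfies the hypothesis. Yet the sign character $\chi$ is sent to $\mathbb{C}\cong\mathbf{1}$ while $\Hom(\chi,\mathbf{1})=0$. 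So $\mathcal{F}$ is not full, and the conclusion of Step 3 ($\chi\cong S_{\mathbf{1}}$) is false.

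Your argument breaks in two places. First, the nondegenerate trace pairing only pairs $\Hom(S_Y,(X,e_i))$ against $\Hom((X,e_i),S_Y)$ inside $\operatorname{Ab}(\mathcal{C})$. It gives no way to produce a nonzero morphism there from one in $\mathcal{D}$; that lifting is precisely fullness. Second, a simple summand of $X^*\otimes X_Y$ whose \emph{image} is $\mathbf{1}$ yields a morphism $\mathbf{1}\to X^*\otimes X_Y$ only if that summand is isomorphic to $\mathbf{1}$ in $\operatorname{Ab}(\mathcal{C})$. That is Step 3 again for $Y=\mathbf{1}$, so the reasoning is circular; in the example the summand is $\chi$ itself. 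Your Step 3 also tacitly assumes $\mathcal{F}(X,e_i)$ is simple, which fails, e.g., for the $2$-dimensional irreducible of $S_3$ under the fiber functor.

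You were right that the literal statement is false, and this also undermines the paper's own proof. The paper extends to $\operatorname{Ab}(\mathcal{F})$, proves essential surjectivity exactly as in your setup, and then invokes the ``general result'' that a faithful, essentially surjective, additive functor into a finitely semisimple category is full. That result is false. Two counterexamples are the additive completion of your bare example, $\operatorname{Vec}\oplus\operatorname{Vec}\to\operatorname{Vec}$ with $(V,W)\mapsto V\oplus W$, and the fiber functor above. So the paper's route has the same hole as your Step 3. No proof from the stated hypotheses can exist, even with tensor, pivotal and unitary structure added.

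What is missing is a hypothesis guaranteeing that every simple object of $\operatorname{Ab}(\mathcal{C})$ is isomorphic to one of the $S_Y$. For example, one could assume that every minimal projection in every $\End_{\mathcal{C}}(X)$ is equivalent to some $p_Y$. With such a hypothesis added, your Steps 1, 2 and 4 constitute a complete proof: semisimplicity comes from the $\dagger$-structure, and the dimension count finishes it. In the near-group application this input has to come from the skein theory itself, for instance from an identification of the simple summands of words in $\rho$ and $g$ like the one sketched at the end of the proof of Theorem~\ref{thm:karCom}. It cannot come from abstract properties of $\mathcal{F}$.
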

\begin{proof}
    By Proposition~\ref{prop:uni}, we have that $\mathcal{F}$ extends to an additive functor $\operatorname{Ab}(\mathcal{F}): \operatorname{Ab}(\mathcal{C}) \to \mathcal{D}$. For an object $(X, p)\in \operatorname{Ab}(\mathcal{C})$ we have by construction that $ \operatorname{Ab}(\mathcal{F})( X, p)  = \im(   \mathcal{F}(p) )   $.

    We claim $ \operatorname{Ab}(\mathcal{F})$ is essentially surjective. As $\mathcal{D}$ is semisimple, and $ \operatorname{Ab}(\mathcal{F})$ is additive, it suffices to verify essentially surjective on simples. Let $Y \in \mathcal{D}$ be a simple object. By assumption there exists an object $X_Y \in \mathcal{C}$, and a projection $p_Y \in \operatorname{End}_\mathcal{C}(X_Y)$ such that $\im( \mathcal{F}(X_Y, p_Y)) \cong Y$. Hence $ \operatorname{Ab}(\mathcal{F})( X_Y, p_Y) \cong Y$, and so $ \operatorname{Ab}(\mathcal{F})$ is essentially surjective.

    The claim that $ \operatorname{Ab}(\mathcal{F})$ is full then follows from the general result that a faithful essentially surjective additive functor into a finitely semisimple category is automatically full.
\end{proof}

Finally we define the semisimple quotient.

\begin{defn}
Let $\mathcal{C}$ be a spherical tensor category. We define the negligible ideal.
\[  \operatorname{Neg}(\mathcal{C}):= \{ f\in \Hom_{\mathcal{C}}(X\to Y) : \operatorname{tr}(f \circ g) = 0 \text{ for all } g \in \Hom_{\mathcal{C}}(Y\to X)      \} .   \]
\end{defn}

The negligible ideal is a tensor ideal, and the quotient is again a spherical tensor category. This quotient is called the semisimple quotient (even though it is not always semisimple).

\begin{defn}
    Let $\mathcal{C}$ be a spherical tensor category. We define
    \[ \overline{\mathcal{C}}:=  \mathcal{C}/   \operatorname{Neg}(\mathcal{C}).   \]
\end{defn}
We will use the following standard result in this note.
\begin{prop}\label{prop:descent}\cite[Proposition 2.39]{EM-S}
    Let $\mathcal{C}$ be a spherical category, let $\mathcal{D}$ be a unitary tensor category, and let $\mathcal{F}:\mathcal{C}\to \mathcal{D}$ be a pivotal functor. Then there exists a faithful pivotal functor $\overline{\mathcal{F}}: \overline{\mathcal{C}}\to \mathcal{D}$ such that the following diagram commutes
      \[ \begin{tikzcd}
\mathcal{C} \arrow[d] \arrow[r, "\mathcal{F}"] & \mathcal{D} \\
\overline{\mathcal{C}} \arrow[ur, "\overline{\mathcal{F}}"]& 
\end{tikzcd}\]
where $\mathcal{C}\to \overline{\mathcal{C}}$ is the semisimplification functor. In particular, $\overline{\mathcal{C}}$ is a unitary category.
\end{prop}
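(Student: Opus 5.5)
The plan is to build $\overline{\mathcal{F}}$ by having $\mathcal{F}$ factor through the quotient by its kernel, and then to show that this kernel is exactly the negligible ideal.

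Let $\mathcal{I} := \ker \mathcal{F}$, the collection of morphisms $f$ with $\mathcal{F}(f)=0$. Since $\mathcal{F}$ is a monoidal functor, $\mathcal{I}$ is a tensor ideal, so $\mathcal{F}$ factors through a faithful functor $\mathcal{C}/\mathcal{I} \to \mathcal{D}$. It therefore suffices to prove $\mathcal{I} = \operatorname{Neg}(\mathcal{C})$; then $\overline{\mathcal{C}} = \mathcal{C}/\mathcal{I}$, and the induced functor is $\overline{\mathcal{F}}$. This functor is pivotal because $\mathcal{F}$ is, and the triangle commutes by construction.

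The equality of ideals rests on one fact: pivotal functors preserve traces. Concretely, $\operatorname{tr}_{\mathcal{D}}(\mathcal{F}(h)) = \operatorname{tr}_{\mathcal{C}}(h)$ for every endomorphism $h$. Here I use that $\End_{\mathcal{C}}(\mathbf{1})$ and $\End_{\mathcal{D}}(\mathbf{1})$ are both $\C$, identified via $\mathcal{F}$. Granting this, the two inclusions go as follows.

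\begin{itemize}
\item $\mathcal{I} \subseteq \operatorname{Neg}(\mathcal{C})$. If $\mathcal{F}(f)=0$, then for every $g$ we have $\operatorname{tr}(f\circ g) = \operatorname{tr}(\mathcal{F}(f)\circ\mathcal{F}(g)) = 0$.
\item $\operatorname{Neg}(\mathcal{C}) \subseteq \mathcal{I}$. This is the step that needs unitarity of $\mathcal{D}$. In a unitary category the trace form $(a,b)\mapsto \operatorname{tr}(a\circ b^*)$ on $\Hom_{\mathcal{D}}(X\to Y)$ is positive definite. So if $\mathcal{F}(f)\neq 0$, then $\operatorname{tr}(\mathcal{F}(f)\circ \mathcal{F}(f)^*) > 0$.
\end{itemize}

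The obstacle in the second inclusion is that $\mathcal{F}(f)^*$ need not lie in the image of $\mathcal{F}$, so I cannot directly produce a $g$ with $\operatorname{tr}(f\circ g)\neq 0$. I would handle this as follows.

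First, assume, as holds in the paper's intended setting, that $\mathcal{C}$ carries a dagger structure and $\mathcal{F}$ is a dagger functor. Then take $g = f^*$: we get $\operatorname{tr}(f\circ f^*) = \operatorname{tr}(\mathcal{F}(f)\mathcal{F}(f)^*) \neq 0$, so $f \notin \operatorname{Neg}(\mathcal{C})$.

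Without a dagger on $\mathcal{C}$, I would instead work inside the image. The image $\mathcal{F}(\Hom_{\mathcal{C}}(Y\to X))$ is a subspace $V$ of $\Hom_{\mathcal{D}}(Y\to X)$. The pairing $\Hom(X\to Y)\times V\to\C$ must be shown nondegenerate on the image of $\Hom_{\mathcal{C}}(X\to Y)$, and that is exactly where one needs $V$ to be closed under $*$. I expect this to be the real content of \cite[Proposition 2.39]{EM-S}, so I would follow its hypothesis that the functor respects the dagger.

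Finally, unitarity of $\overline{\mathcal{C}}$ follows from faithfulness. Through $\overline{\mathcal{F}}$ its hom spaces embed, compatibly with traces, into those of the unitary category $\mathcal{D}$, so the restricted positive-definite inner product makes $\overline{\mathcal{C}}$ unitary.
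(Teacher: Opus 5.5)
The paper does not prove this proposition; it only cites \cite[Proposition 2.39]{EM-S}. Your argument is the standard proof, and it is correct under the two hypotheses you make explicit: $\End_{\mathcal{C}}(\mathbf{1})=\C$, and $\mathcal{F}$ is a $\dagger$-functor out of a $\dagger$-category. It identifies $\ker\mathcal{F}$ with $\operatorname{Neg}(\mathcal{C})$. Trace preservation gives $\ker\mathcal{F}\subseteq\operatorname{Neg}(\mathcal{C})$, and positivity of $\operatorname{tr}(aa^\dagger)$ in $\mathcal{D}$, applied to $a=\mathcal{F}(f)$, gives the reverse inclusion. Both hypotheses hold wherever the paper invokes the result. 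The first comes from \cite[Corollary 1]{Caleb}. The second holds because the functors into $\mathcal{D}$ and into $\GPA(\Gamma_N)$ are $\dagger$-functors.

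You are right that the dagger is indispensable: without it the statement as written is false. Let $\mathcal{C}$ be the $\C$-linear pivotal subcategory of finite-dimensional Hilbert spaces generated by $V=\C^2$ and a nonzero nilpotent $n\in\End(V)$, without adjoining $n^\dagger$.
\begin{itemize}
\item Every closed diagram is a product of loops evaluating to $\operatorname{tr}(n^k)$, which vanishes for $k\geq 1$. Hence $\End_{\mathcal{C}}(\mathbf{1})=\C$ and $\End_{\mathcal{C}}(V)=\operatorname{span}\{\mathrm{id}_V,n\}$.
\item Since $\operatorname{tr}(n)=\operatorname{tr}(n^2)=0$, we get $n\in\operatorname{Neg}(\mathcal{C})$.
\item The inclusion functor is pivotal but does not kill $n$, so it cannot factor through $\overline{\mathcal{C}}$.
\end{itemize}

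Three small points are worth making explicit.
\begin{itemize}
\item Positivity of $\operatorname{tr}(aa^\dagger)$ requires $\mathcal{D}$ to carry its unitary pivotal structure. It fails, for example, for $\operatorname{Vec}(\Z_2)$ with the pivotal structure in which the generator has dimension $-1$.
\item The dagger descends to $\overline{\mathcal{C}}$ because $\operatorname{Neg}(\mathcal{C})=\ker\mathcal{F}$ is $\dagger$-closed. Your final paragraph uses this implicitly.
\item In the paper's application, $\mathcal{D}=\GPA(\Gamma_N)$ has $\End(\emptyset)\cong\C^{N+1}$ (one summand per vertex) rather than $\C$. So your identification $\End_{\mathcal{D}}(\mathbf{1})=\C$ does not literally apply. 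The argument still goes through: $\mathcal{F}$ sends scalars to constant vectors, and the trace of $aa^\dagger$ in $\GPA(\Gamma_N)$ is a vector with nonnegative entries (the weights are positive Frobenius--Perron ratios) that is nonzero whenever $a\neq 0$.
\end{itemize}
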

\subsection{Near-Group Fusion Categories}

In this subsection we introduce near-group fusion categories. These were initially defined by Siehler \cite{Siehler}, and studied in detail by Izumi \cite{Izumi} and Gannon and Evans \cite{E-G}. We first define a near-group fusion ring.

\begin{defn}
Let $G$ be a finite group, $\rho$ be a symbol, and $N\in \mathbb{N}$. We define the fusion ring $R(G,n)$ as the ring with $\mathbb{Z}$-basis $G \cup \{\rho\}$, and multiplication defined by the rules:
\begin{align*}
g \cdot h &=     g\circ h \qquad &&\text{ for all $g,h \in G$}\\
g \cdot \rho = \rho \cdot g &=     \rho \qquad &&\text{ for all $g \in G$}\\
\rho \cdot \rho &= \sum_{g \in G} g + N \rho.
\end{align*}
\end{defn}
We then define a near-group fusion category as a categorification of the above rings.
\begin{defn}
Let $G$ be a finite group, and $N\in \mathbb{N}$. We say a unitary fusion category $\mathcal{C}$ is a ($G+N$) near-group fusion category if there is a based isomorphism
\[  K_0(\mathcal{C}) \cong R(G, N).       \]
Here $ K_0(\mathcal{C})$ is the Grothendieck ring of $\mathcal{C}$.
\end{defn}

We now present some known results on the structure of near-group fusion categories.

\begin{thm}\cite[Theorem 1.1]{Izumi}
Let $G$ be a finite group, $N\in \mathbb{N}$, and $\mathcal{C}$ a $G+N$ near-group fusion category. Then either
\begin{enumerate}[a)]
    \item $N = |G|-1$, and $G$ is abelian, or
    \item $N = 2^a$ for some $a\in \mathbb{N}$ and $G$ is an extra-special 2-group of order $2^{2a+1}$, or
    \item $N = k |G|$ for some $k\in \mathbb{N}$, and $G$ is abelian.
\end{enumerate}
\end{thm}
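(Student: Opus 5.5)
The strategy is to work purely at the level of dimensions, associativity of the fusion ring, and the action of $G$ on the fusion space $V:=\Hom(\rho\otimes\rho\to\rho)$, following Izumi's approach. Write $d=\dim\rho$ and $n=|G|$. The fusion rules give
\[ d^2 = n + N d, \qquad d = \tfrac{N+\sqrt{N^2+4n}}{2}. \]
The first step is to split according to whether $d$ is rational. The Frobenius--Perron dimension $d$ is an algebraic integer, so rationality means $N^2+4n$ is a perfect square.

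In the irrational case, $d$ has a Galois conjugate $d'=\tfrac{N-\sqrt{N^2+4n}}{2}<0$. I would study the $G$-grading and the $G$-action on $V$, of dimension $N$. Since $g\otimes\rho\cong\rho$, tensoring by $g$ on the left and right gives a projective action of $G\times G$ on $V$. The aim is to show that the left action is free after passing to a suitable basis, which would give $|G|\mid N$.

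To see this, consider the sub-fusion category generated by invertibles. Because $\rho\otimes\bar\rho\supset\bigoplus_g g$, the object $A=\bigoplus_g g$ is an algebra, namely $\Vecc(G)$ twisted by a 3-cocycle. The category of $A$-modules is then considered, and $\rho$ becomes an $A$-module whose internal End can be computed. Counting via Frobenius--Perron dimensions, one should get that $\dim\Hom(A\otimes\rho\to\rho)$ splits as $n$ copies of $N/n$-dimensional blocks. Commutativity of $G$ in this case should follow from the fact that $G$ acts by conjugation on the fusion rules of a dual category, whose simple objects must be invertible because $d$ is irrational.

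In the rational case, write $N^2+4n=m^2$. Taking dimensions of $\Hom(\rho\otimes\rho\otimes\rho\to\rho)$ in two ways is not enough by itself, so I would instead use the Frobenius--Schur type constraint $N\le n$ together with the Drinfeld center / induction argument: the object $\rho$ induces to the center, and the global dimension $n+d^2$ must divide suitably. Izumi's Cuntz algebra analysis gives either $N=n-1$ (Tambara--Yamagami-like, $d^2=n+(n-1)d$, so $d=n$) or the extra-special case $N=\sqrt n$-type with $n=2^{2a+1}$ and $N=2^a$.

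The main obstacle is this rational case classification, especially extracting extra-special $2$-groups. It requires the detailed structure of the cohomology of the projective representation of $G$ on $V$, with the commutator pairing being nondegenerate, which forces a Heisenberg-type group. Realistically I would cite Izumi's theorem, as the paper does, rather than reprove it.
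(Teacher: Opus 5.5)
The paper gives no proof of this theorem; it quotes Izumi's Theorem 1.1. Your closing decision to cite Izumi is therefore exactly what the paper does. The sketch you put in front of the citation, however, is not a workable outline and should not be offered as one.

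\textbf{Divisibility in the irrational case.} This step rests on a miscount. Since $A\otimes\rho\cong\rho^{\oplus |G|}$, the space $\Hom(A\otimes\rho\to\rho)$ is $|G|$-dimensional. It does not involve $N$ and cannot split into blocks of dimension $N/|G|$. The multiplicity $N$ lives in $\Hom(\rho\otimes\rho\to\rho)$. Where it appears inside ${}_A\mathcal{C}$, namely in $\Hom_A(A\otimes\rho\to A\otimes\rho\otimes\rho)\cong\Hom(\rho\to A\otimes\rho\otimes\rho)$, it is automatically a free module of rank $N$ over the $|G|$-dimensional algebra $\End_A(A\otimes\rho)\cong\bigoplus_g\Hom(\rho\to g\otimes\rho)$. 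That algebra acts by freely permuting the graded pieces $\Hom(\rho\to h\otimes\rho\otimes\rho)$. So Frobenius--Perron counting there only returns the tautology $|G|N=|G|\cdot N$.

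What you actually need is that the projective $G$-representation on $\Hom(\rho\otimes\rho\to\rho)$ is a multiple of the twisted regular representation, i.e.\ its character vanishes away from $e$. No step of your sketch addresses this. It is where the real associativity work in Izumi's Cuntz-algebra analysis (and Evans--Gannon's) goes. Dimension data alone is too weak here. For abelian $G$ with $d$ irrational, integrality of $\dim\mathcal{C}/f$ over the formal codegrees $f\in\{|G|,\ |G|+d'^2\}$ reduces to $|G|\mid N^2$. The ring $R(\mathbb{Z}_4,2)$ satisfies this, yet by the theorem it has no unitary categorification.

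\textbf{Commutativity.} Your argument rests on a false claim. The dual category ${}_A\mathcal{C}_A$ has the same global dimension as $\mathcal{C}$, namely $|G|+d^2=2|G|+Nd$, which is irrational. A pointed fusion category has integer global dimension, so the dual cannot have only invertible simples.

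\textbf{Rational case.} Nothing is proved here. The bound $N<|G|$ is elementary arithmetic, not a Frobenius--Schur constraint. From $N^2+4|G|=m^2$ you get $(m-N)(m+N)=4|G|$ with $m-N\geq 2$ even, so $2N<m+N\leq 2|G|$. The remaining dichotomy is exactly Izumi's case analysis, which you defer. It separates $N=|G|-1$ with $G$ abelian from the extra-special case $|G|=2^{2a+1}$, $N=2^a=\sqrt{|G|/2}$, $d=2^{a+1}$ (not ``$N=\sqrt{n}$'').

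So your proposal amounts to the citation, as in the paper. The heuristic layer should be dropped or corrected rather than presented as supporting argument.
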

 For cases a) and b), the near-group categories have been classified \cite{EGO,Izumi}, and so attention is currently placed on the near-group fusion categories in case c). In this note we focus on $\mathbb{Z}_N + N$ near-group fusion categories. As mentioned in the introduction, these categories are conjectured to exist for all $N$ (see \cite{An} for examples of non-cyclic abelian groups $G$ where there are no $G + N$ near group categories).

For all near-group fusion categories we have the following.

\begin{prop}\label{lem:subcat1}\cite{Izumi}
Let $G$ be a finite group, $N\in \mathbb{N}$, and $\mathcal{C}$ a $G + N$ near-group fusion category. Then the monoidal subcategory
\[  \mathcal{C}^{\text{Inv}}:= \langle g \in \mathcal{C} : g  \text{ is invertible} \rangle   \]
is monoidally equivalent to $\operatorname{Vec}(G)$, the category of $G$-graded vector spaces with trivial 3-cocycle.
\end{prop}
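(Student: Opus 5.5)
The invertible objects of $\mathcal{C}$ already form a pointed fusion subcategory with group of simples $G$. By the classification of pointed fusion categories, $\mathcal{C}^{\text{Inv}}\simeq \operatorname{Vec}^\omega(G)$ for some $\omega\in H^3(G,\mathbb{C}^\times)$. So the whole task is to show that $\omega$ is cohomologically trivial, and we would do this using the object $\rho$.

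The key step is to find a module category. The subcategory $\mathcal{C}^{\text{Inv}}$ acts on the full subcategory of $\mathcal{C}$ generated by $\rho$, because $g\otimes\rho\cong\rho$ for all $g\in G$. This gives a rank-one module category $\mathcal{M}$ over $\operatorname{Vec}^\omega(G)$. Concretely:
\begin{itemize}
\item choose isomorphisms $u_g: g\otimes\rho\to\rho$;
\item compare $u_g\circ(\mathrm{id}_g\otimes u_h)$ with $u_{gh}$ composed with the associator $(g\otimes h)\otimes\rho\cong g\otimes(h\otimes\rho)$;
\item since $\End(\rho)=\mathbb{C}$, these two morphisms differ by a scalar $\mu(g,h)\in\mathbb{C}^\times$.
\end{itemize}

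Next we use the pentagon axiom for the associator on $g\otimes h\otimes k\otimes\rho$. Because the associativity constraints of $\mathcal{C}$ restrict on $G$ to a representative of $\omega$, the pentagon gives
\[
\omega(g,h,k)=\frac{\mu(h,k)\,\mu(g,hk)}{\mu(gh,k)\,\mu(g,h)},
\]
possibly up to inversion or orientation conventions. So $\omega=d\mu$ is a coboundary. Equivalently, a rank-one module category over $\operatorname{Vec}^\omega(G)$, i.e.\ a fiber functor, exists only if $[\omega]=0$.

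The main obstacle is bookkeeping. We must ensure that the scalars coming from the associator of $\mathcal{C}$ on $g,h,k,\rho$ genuinely match the 3-cocycle of the subcategory. This follows once we fix a skeletal model in which the associator restricted to invertible objects is $\omega$. Rescaling the $u_g$ changes $\mu$ only by a coboundary, so the conclusion does not depend on these choices. Finally we conclude $\mathcal{C}^{\text{Inv}}\simeq\operatorname{Vec}(G)$ as monoidal categories.
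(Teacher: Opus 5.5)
Your proof is correct. The paper gives no argument of its own for this proposition; it simply quotes it from \cite{Izumi}, so there is no internal proof to compare against.

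What you have written is the standard argument. The full subcategory on multiples of $\rho$ is a rank-one module category over $\mathcal{C}^{\text{Inv}}\simeq\operatorname{Vec}^\omega(G)$. A rank-one module category is equivalently a fiber functor, and its existence forces $[\omega]=0$.

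In the operator-algebraic framework of Izumi's work, the same mechanism typically appears as follows. Perturb each $\alpha_g$ by an inner automorphism so that $\alpha_g\circ\rho=\rho$. The unitaries implementing $\alpha_g\alpha_h\alpha_{gh}^{-1}$ then commute with the image of $\rho$, so they are scalars by irreducibility. Hence the obstruction cocycle is a coboundary.

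The bookkeeping you flag disappears in a skeletal model:
\begin{itemize}
\item There $g\otimes\rho=\rho$ on the nose, so you may take $u_g=\mathrm{id}_\rho$.
\item Then $\mu(g,h)$ is just the scalar $a_{g,h,\rho}\in\operatorname{Aut}(\rho)=\mathbb{C}^\times$.
\item The pentagon for $(g,h,k,\rho)$ reads $\mu(g,h)\,\mu(gh,k)=\omega(g,h,k)\,\mu(h,k)\,\mu(g,hk)$.
\item So $\omega=d(\mu^{-1})$ in your convention for $d$.
\end{itemize}

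Finally, your argument uses only that $\rho$ is simple and that $g\otimes\rho\cong\rho$. It needs neither abelianness of $G$, nor any condition on $N$, nor unitarity.
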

 
In the specific case that $N = |G|$ we also have the following which is also due to Izumi.

\begin{prop}\label{lem:subcat2}\cite[Theorem 11.2]{Izumi}
    Let $G$ be a finite abelian group, and $\mathcal{C}$ a $G + |G|$ near-group fusion category. Let $\rho \in \mathcal{C}$ be the unique (up to isomorphism) non-invertible simple object. Then $\mathbf{1} \oplus \rho$ has the structure of a Q-system. i.e. a special $C^*$-Frobenius algebra.
\end{prop}
\begin{proof}
In \cite[Theorem 11.2]{Izumi} Izumi constructs a semisimple indecomposable $\mathcal{C}$-module category $\mathcal{M}$, with an object $\iota \in \mathcal{M}$ satisfying $\underline{\operatorname{End}}(\iota) \cong \mathbf{1} \oplus \rho$. It follows from \cite[Theorem 1]{OstMod} that $\mathbf{1} \oplus \rho$ is a connected separable algebra object, and from \cite{Whale} we get that $\mathbf{1} \oplus \rho$ has the structure of a Q-system.
\end{proof}

\subsection{Multicolour Graph Planar Algebras}\label{subsec:gpa}

We now define the multicolour graph planar algebra. This object will be the key tool for constructing near-group fusion categories in this note. The idea behind the multicolour graph planar algebra is certainly not new. The graph planar algebra of \cite[Section 2.2]{Emily} is essentially what we define below, however we also include rigidity maps in our definition. Our definition is also very similar to the oriented graph planar algebra of \cite{dan}, which is essentially the one colour variation of our construction. 

To explain the key motivation behind the definition of a multicolour graph planar algebra, we outline the connection between the graph planar algebra, and the endofunctor category of a semisimple category. Recall that any finitely semisimple category $\mathcal{M}$ is equivalent to $\operatorname{Vec}^{\oplus n}$, where $n$ is the rank of the category. Up to natural isomorphism, an endofunctor on $\mathcal{M}$ is entirely determined by a $n\times n$ matrix, whose entries are in $\mathbb{N}_{\geq 0}$ (which encodes where the simples of $\mathcal{M}$ get mapped to). Such a matrix is equivalent data to a graph $\Gamma$ with $n$ vertices. The usual graph planar algebra $\operatorname{GPA}(\Gamma)$ when $\Gamma$ can then be defined as the rigid monoidal subcategory of $\operatorname{End}(\mathcal{M})$ generated by tensor powers of the object $\Gamma$.

The logic behind the multicolour graph planar algebra is similar to the above discussion. The difference now is that we have a collection of graphs $\{ \Gamma_i \}$, which we will think of as being labelled by colours. The multicolour graph planar algebra $\operatorname{GPA}(\{ \Gamma_i \})$ will then be (up to relabelling), the rigid monoidal subcategory $\operatorname{End}(\mathcal{M})$ generated by these objects.

With this motivation in mind we now present the definitions. We first define the concept of a coloured graph.

\begin{defn}
    Let $\mathcal{I}$ be a set. An $\mathcal{I}$-coloured graph is a pair $(V, \{E_i, s_i, t_i  : i \in \mathcal{I}\} )$ where $V$ is a set of vertices, each $E_i$ is a set of edges, and $s_i, t_i: E_i \to V$ are functions. 
\end{defn}
We should think of an $\mathcal{I}$-coloured graph as $| \mathcal{I}|$ individual graphs overlayed on each other on the same vertex set $V$. The labels $\mathcal{I}$ which distinguish each layer we think of as colours.
\begin{defn}
Let $\mathcal{I}$ be a set, and $\Gamma = (V, \{E_i, s_i, t_i  : i \in \mathcal{I}\} )$ an $\mathcal{I}$-coloured graph. We define $\Gamma_i := (V, E_i,s_i,t_i)$. 
\end{defn}
Note that $\Gamma_i$ is a graph in the traditional sense.

We can also define paths in a coloured graph.

\begin{defn}
    Let $\mathcal{I}$ be a set, and $\Gamma= (V, \{E_i, s_i, t_i  : i \in \mathcal{I}\} )$ an $\mathcal{I}$-coloured graph. For a word $t:= i_1i_2\cdots i_m$ with $i_l\in \mathcal{I}$ we say a coloured path of type $t$ in $\Gamma$ is a ordered tuple of edges $p=(e_1, e_2, \cdots, e_{m})$ such that $e_j \in E_{i_j}$, and such that $t_{i_{j-1}}(e_{j-1}) = s_{i_j}(e_j)$ for all $1< j \leq m$. We will write $s(p) := s(e_1)$ and $t(p) := t(e_m)$.
\end{defn}

To give our upcoming definition of the multicolour graph planar algebra a rigidity structure, we will need to define a rigidity structure on a multicolour graph. This will be a condition that the set of coloured subgraphs is closed under the reversal of edges, and a choice (in the case that the graph has edge multiplicity) of identification between the edges in a graph and its reversal.

\begin{defn}
Let $\mathcal{I}$ be a set, and $\Gamma = (V, \{E_i, s_i, t_i  : i \in \mathcal{I}\} )$ an $\mathcal{I}$-coloured graph. A rigidity structure on $\Gamma$ is a pair $(* ,\{ \overline{\cdot}^i : i\in I\})$ such that $* : \mathcal{I} \to \mathcal{I}$ is an involution, and each $ \overline{\cdot}^i: E_i \to E_{i^*}$ is a bijection such that 
\[ t_i(e) = s_{i^*}(\overline{e}^i) \quad \text{ and }\quad s_i(e) = t_{i^*}(\overline{e}^i) \quad \text{ for all } e\in E_i,  \]
and with the property that $\overline{\cdot}^i$ and $\overline{\cdot}^{i^*}$ are inverse functions.
\end{defn}
In practice we will drop the superscript on the map $\overline{\cdot}^i$, as the edge the map is applied to will remove ambiguity.

We are now set to define the multicolour graph planar algebra.

\begin{defn}
Let $\mathcal{I}$ be a set, and $\Gamma = (V, \{E_i, s_i, t_i  : i \in \mathcal{I}\} )$ an $\mathcal{I}$-coloured graph. We define the monoidal category $\operatorname{GPA}(\Gamma)$ as the category whose objects are finite strings $i_1i_2 \cdots i_m$ in the alphabet $\mathcal{I}$. Let $s =i_1i_2\cdots i_m$ and $t= j_1j_2 \cdots j_l$ be objects. We define the morphisms $s\to t$ as
\[   \Hom( s \to  t):= \operatorname{span}_{\mathbb{C}}\{(p,q) : \text{$p$ is path of type $s$, $q$ is a path of type $t$, $s(p) = s(q)$, and $t(p) = t(q)$}    \}                  \]
The composition $ \circ: \Hom( s \to  t) \times \Hom( t \to  r) \to \Hom( s \to  r) $ is defined as the $\mathbb{C}$-linear extension of the map
\[     (p,q)\circ (p' ,q'):= \delta_{q = p'}(p,q').            \]
Note that the identity morphism on $i$ is then $\sum_{e \in \Gamma_i}(e,e)$.

The tensor product of objects is concatenation of strings. Hence the tensor unit is the empty string $\emptyset$. The tensor product on morphisms $ \otimes: \Hom( r \to  s) \times \Hom( t \to  u) \to \Hom( rt \to  su) $ is defined as the $\mathbb{C}$-linear extension of the map
\[     (p,q)\circ (p' ,q'):= \delta_{t(p) = s(p')}\delta_{t(q) = s(q')}(p,q').            \]
We define a $\dag$-structure on $\operatorname{GPA}(\Gamma)$ as the conjugate-linear extension of the map
\[ (p,q)^\dag := (q,p).  \]

Let $(* ,\{ \overline{\cdot}^i : i\in I\})$ be a rigidity structure on $\Gamma$, and for each colour $i \in \mathcal{I}$ let $\lambda_i$ be a positive Frobenius-Perron eigenvector of $\Gamma_i$. Then we define a morphism
\[ \operatorname{ev}^{\lambda_i}_{i}:= \sum_{e\in E_i}  \sqrt{ \frac{ \lambda_i(  t_i(e))}{ \lambda_i(  s_i(e)) }    }  ( (e,\overline{e}), s_i(e)) \in \Hom(  i i^* \to \emptyset )        \]
We define $\operatorname{coev}^{\lambda_i}_{i}:= (\operatorname{ev}^{\lambda_i}_{i^*})^\dag \in \Hom(   \emptyset \to i^* i ) $. 
\end{defn}
From this definition it follows that $\operatorname{GPA}(\Gamma)$ is a unitary tensor category (via the canonical faithful embedding into $\operatorname{Hilb}$). Further, the maps $\operatorname{ev}^{\lambda_i}_{i}$ and $\operatorname{coev}^{\lambda_i}_{i}$ equip $\operatorname{GPA}(\Gamma)$ as a rigid tensor category, where it follows that duality on objects is
\[    (i_1i_2\cdots i_m)^* =   i_m^*      \cdots  i_2^{*}i_1^{*}.     \]
Finally a direct computation shows that the identity map is a natural isomorphism $** \to \operatorname{Id}_{\operatorname{GPA}(\Gamma)}$. Hence $\operatorname{GPA}(\Gamma)$ has the structure of a pivotal category.

Finally in the case that $i = i^*$ (i.e $\Gamma_i$ is undirected) we have that $i \cong i^*$ in $GPA(\Gamma)$. A choice for this isomorphism is the map
\[   \sum_{e \in E_i} (e, \overline{e})  \in \Hom(  i \to i^*).                                    \]
Direct computation gives that this isomorphism is symmetrically self dual with respect to our pivotal structure. 
\section{Skein Theory for Cyclic Near-group Categories}\label{sec:skein}
In this section we develop skein theory for near-group categories. We focus on the case that the group $G$ is cyclic of order $N$, though our graphical techniques will work in the general case with slight modifications. The main definition of this section is the following skein category.
\begin{defn}\label{def:NGskein}
    Let $N \in \mathbb{N}$, $\omega \in \mathbb{C}$, and $\vec{r}\in \mathbb{C}^N$. We define $\delta := \frac{N + \sqrt{N(N+4)}}{2}$. We define $\mathcal{C}(\mathbb{Z}_N, \omega, \vec{r})$ as the pivotal $\dag$-category generated by the objects $\rho$ and $g$, with $\rho$ symmetrically self-dual, and the morphisms
    \[ \att{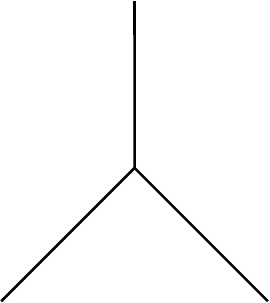}{.4} \in \Hom(\rho\otimes \rho \to \rho) ,\qquad \qquad \att{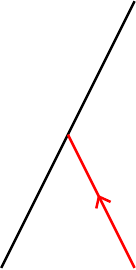}{.4} \in \Hom(\rho\otimes g \to \rho),\qquad \qquad \att{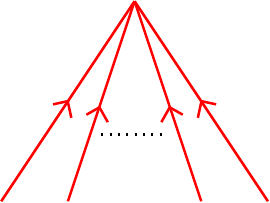}{.4} \in \Hom( g^{\otimes N} \to \mathbf{1}),         \]
    and the following relations:
    
$SO(3)$ Relations:
\[\att{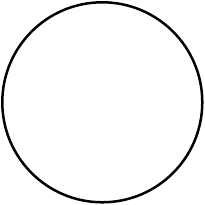}{.25} = \delta,\quad \att{T}{.25} = \att{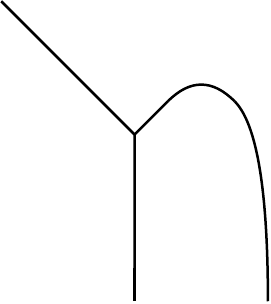}{.25},\quad \att{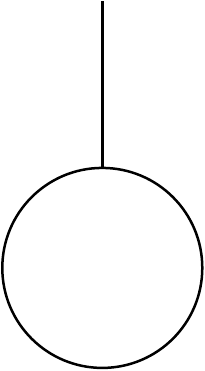}{.25}=0,\quad \att{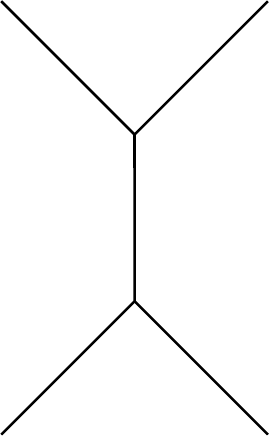}{.2} -\att{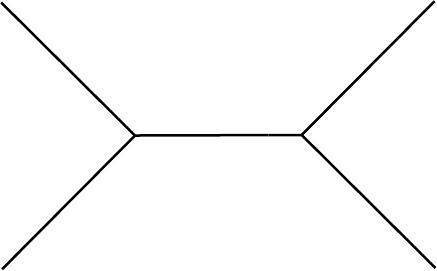}{.2} = \frac{1}{\delta-1}\left( \att{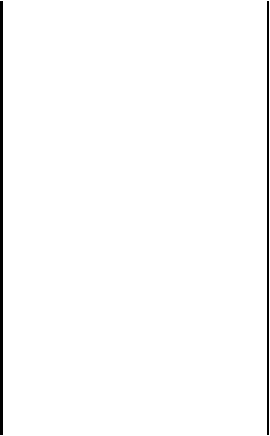}{.2}\quad -\quad \att{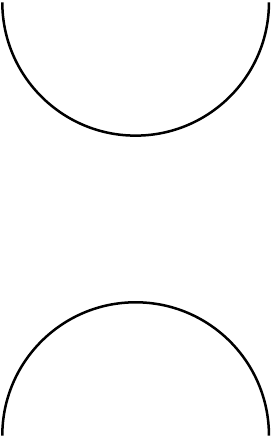}{.2} \right)  \]
$\operatorname{Vec}(\mathbb{Z}_N)$ Relations:
\[ \att{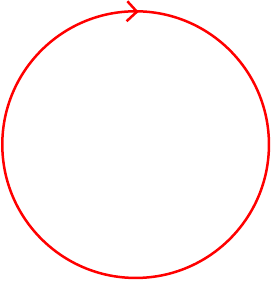}{.3} = 1 ,\qquad  \att{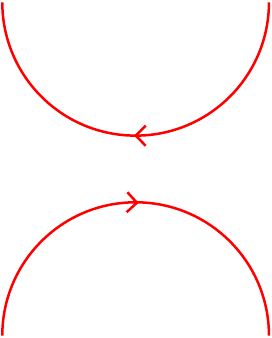}{.3} = \att{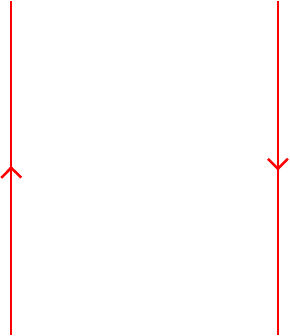}{.3},\qquad\att{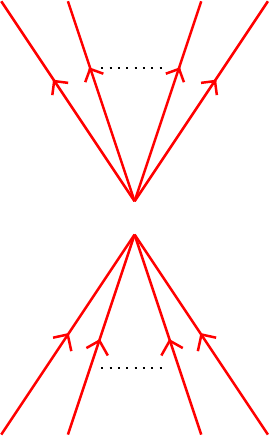}{.3}  = \att{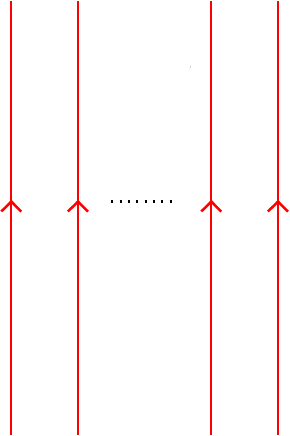}{.3}, \qquad  \att{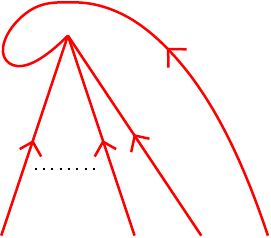}{.3} = \att{ggen}{.3} \]
Mixed  Relations:
\[\att{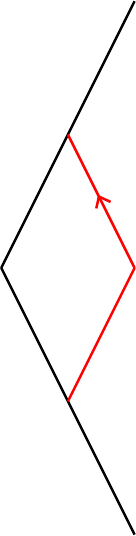}{.3} =  \att{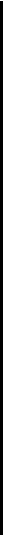}{.3},\qquad  \att{gMix1}{.3} = (-1)^{N+1} \att{gMix2}{.3},\qquad  \att{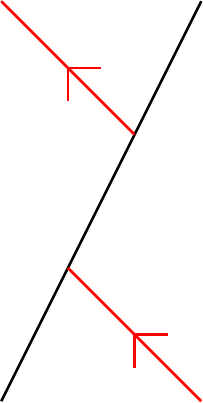}{.3} = \omega   \att{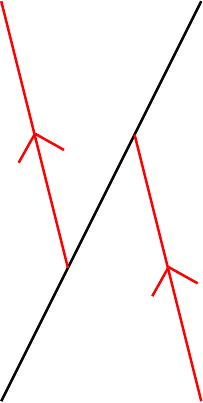}{.3} ,\qquad  \att{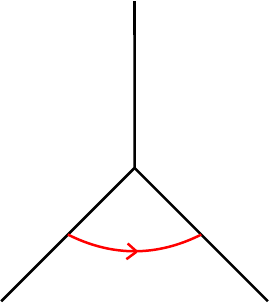}{.3} = \sum_{i = 0}^{N-1}   \vec{r}_i \att{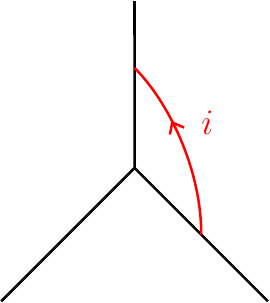}{.3},\]
\[ \att{schur0}{.3}\quad = \quad 0,\qquad     \att{schur1}{.3} \quad = \quad 0 ,\qquad \text{ for all } 1\leq i < N \]
\end{defn}

In this definition and throughout the remainder of the paper we use the shorthand
\[ \att{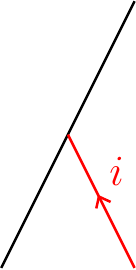}{.4} \quad:= \quad \att{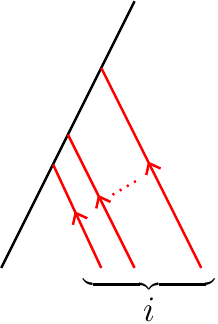}{.4}  \]
Note that the $(-1)^{N+1}$ can be removed via renormalisation of the generator in $\Hom( g^{\otimes N} \to \mathbf{1})$. However this results in a graph planar algebra embedding with more complicated coefficients.

We will show that given a cyclic near-group category $\mathcal{D}$, there exist parameters such that $\mathcal{C}(\mathbb{Z}_N, \omega, \vec{r})$ is a presentation for $\mathcal{D}$, and vice verse, that the Cauchy completion of the semisimplification of $\mathcal{C}(\mathbb{Z}_N, \omega, \vec{r})$ gives a near-group category for any parameters where the category is non-zero and unitary. Our motivation for this theorem is that it will allow us to query the existence of the categories $\mathcal{D}$ using graph planar algebra techniques. These graph planar algebra applications will be the focus of the next section.

For the remainder of this section let $\mathcal{D}$ be a unitary fusion category with $K_0(\mathcal{D}) \cong R(\mathbb{Z}_N, N)$. Further we will choose representatives of the isomorphism classes, and label them $\{   g : g\in \mathbb{Z}_N\} \cup \{\rho\}$.



We recall from Proposition~\ref{lem:subcat1} that $\mathcal{D}^\text{Inv} \simeq \operatorname{Vec}(\mathbb{Z}_N)$ as monoidal categories. The skein theory for $\operatorname{Vec}(\mathbb{Z}_N)$ is well-known (see e.g. \cite{agus}). There exists a generating morphism 
\[\att{ggen}{.4} \in \Hom_{\operatorname{Vec}(\mathbb{Z}_N)}( g^{\otimes N} \to \mathbf{1})\] satisfying the four $\operatorname{Vec}(\mathbb{Z}_N)$ relations as in Definition~\ref{def:NGskein}. By Proposition~\ref{lem:subcat1} we have the same generating morphism and relations in the category $\mathcal{D}$.

We also recall from Proposition~\ref{lem:subcat2} that $\mathbf{1}\oplus \rho \in \mathcal{D}$ has the structure of a Q-system. We thank Noah Snyder for informing us of general result on Frobenius algebras used in the proof of the following result.

\begin{prop}
Let 
\[      q_N:= \frac{1}{2}\sqrt{-2 +N + \sqrt{N^2 + 4N} + \sqrt{-16 + (N-2 + \sqrt{ N^2 + 4N   })^2               }     }  \]   
Then there exists a monoidal $\dag$-functor
\[  SO(3)_{q_N} \to \mathcal{D}        \]
sending the standard generating object of $SO(3)_{q_N}$ to $\rho$.
\end{prop}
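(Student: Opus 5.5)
The plan is to extract from the Q-system $\mathbf{1}\oplus\rho$ of Proposition~\ref{lem:subcat2} a trivalent vertex $\rho\otimes\rho\to\rho$ that satisfies exactly the defining relations of the $SO(3)_q$ trivalent category (in the sense of Kuperberg / Morrison--Peters--Snyder). We would then invoke the universal property of $SO(3)_q$ as the pivotal category presented by a single trivalent generator with those relations.

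\textbf{Decomposing the Q-system.} Write $A=\mathbf{1}\oplus\rho$ with multiplication $m:A\otimes A\to A$ and unit $\eta:\mathbf{1}\to A$. Decompose $m$ into components along the direct sum. Since $A$ is connected ($\dim\Hom(\mathbf{1}\to A)=1$) and unital, the components with a $\mathbf{1}$ input are forced to be identities. The remaining components are:
\begin{itemize}
\item a map $\rho\otimes\rho\to\mathbf{1}$, which is (a scalar multiple of) the cap, so it gives the self-duality of $\rho$;
\item a map $\rho\otimes\rho\to\rho$, which we define to be the trivalent vertex (suitably rescaled).
\end{itemize}
The Frobenius property of a Q-system, together with $m^\dagger$ being the comultiplication, should show three things. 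First, the cap is symmetric, so $\rho$ is symmetrically self-dual. Second, the trivalent vertex is rotationally invariant. Third, its adjoint is its rotation, so the resulting functor will be a $\dagger$-functor.

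\textbf{Deriving the relations.} We would read off the relations in turn.
\begin{itemize}
\item The loop value is $\dim\rho=\delta$, since $\delta$ is the positive root of $d^2=N+Nd$ from the fusion rules.
\item The lollipop vanishes because $\Hom(\mathbf{1}\to\rho)=0$.
\item Specialness ($m\circ m^\dagger=\lambda\,\mathrm{id}_A$) evaluated on the $\rho\to\rho$ component gives the bigon relation; this fixes the normalisation of the vertex.
\item Associativity, projected onto the $\rho^{\otimes 3}\to\rho$ component and then rotated to a $\rho^{\otimes 4}\to\mathbf{1}$ statement, gives "$I$" plus a cup-cap term equals "$H$" plus the rotated cup-cap term. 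This is precisely the $I=H$ relation of $SO(3)_q$, up to a constant $c$.
\end{itemize}
Using the normalisation from specialness and the loop value, we expect $c=\frac{1}{\delta-1}$, matching Definition~\ref{def:NGskein}. This is the Frobenius-algebra/$SO(3)$ correspondence alluded to in the acknowledgements.

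\textbf{Matching parameters and concluding.} It remains to match $\delta=q^2+1+q^{-2}$, the loop value of $SO(3)_q$. Substituting the given $q_N$ should verify this identity; it amounts to solving a quadratic in $q^2+q^{-2}$, with $q_N$ the chosen root. We then also check that the $I=H$ coefficient in the $SO(3)_q$ presentation, written in terms of $d$, equals $\frac{1}{d-1}$ at this $q$. By the universal property of the presented category $SO(3)_q$, sending the trivalent generator to our vertex gives a pivotal $\dagger$-functor $SO(3)_{q_N}\to\mathcal{D}$.

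The main obstacle is the bookkeeping of scalars. One must fix the normalisation of the cap and vertex so that the bigon and $I=H$ coefficients come out exactly right, and check compatibility with the sign and normalisation conventions of the $SO(3)_q$ skein relations. We would first do this in a generic Frobenius algebra with $\Hom(\mathbf{1}\to\rho)=0$, and only afterwards substitute $\delta$.
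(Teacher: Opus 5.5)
Your proposal is correct and takes the same route as the paper. The paper also defines the trivalent vertex as the $\rho\otimes\rho\to\rho$ component $\pi\circ m\circ(\iota\otimes\iota)$ of the Q-system multiplication on $\mathbf{1}\oplus\rho$, notes that $q_N$ is chosen so that $[3]_{q_N}=\delta$, and asserts the $SO(3)_{q_N}$ relations ``by direct computation''. Your breakdown of that computation is a more detailed version of the same argument: unitality and connectedness fix the $\mathbf{1}$-components, specialness fixes the bigon, and associativity on $\rho^{\otimes 3}\to\rho$ gives $I=H$, with the constant forced to $\frac{1}{\delta-1}$ by capping.
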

\begin{proof}
We first note that $q$ is chosen so that $[3]_{q_N} = \delta$. By Proposition~\ref{lem:subcat2} we have that $\mathbf{1} \oplus \rho \in \mathcal{D}$ is a Frobenius algebra object with multiplication map $m: (\mathbf{1}\oplus \rho)\otimes (\mathbf{1}\oplus \rho) \to \mathbf{1}\oplus \rho$. Let $\iota : \rho \to \mathbf{1}\oplus \rho$ be an inclusion map, and define $\pi := \iota^\dag : \mathbf{1}\oplus \rho \to \rho$ the corresponding projection. Define
\[    \att{T}{.4} \quad:=\quad  \att{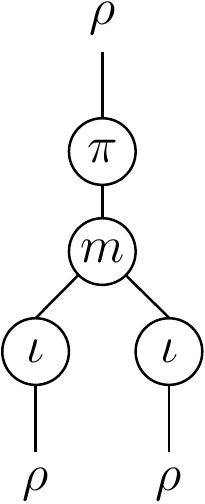}{.4}    \]
It follows by direct computation that this morphism satisfies the defining relations of $SO(3)_{q_N}$, and hence we have a canonical monoidal functor as in the statement of the proposition.
\end{proof}

As a consequence of the above proposition, we have that $\mathcal{D}$ contains $SO(3)_{q_N}$ skein theory. We now move on to the mixed relations.

The fusion rules of $\mathcal{D}$ give that there is an isomorphism 
\[  \att{isoRB}{.4} \in \Hom(\rho\otimes g \to \rho).  \]
The fact that $\mathcal{D}$ is unitary allows us to choose this isomorphism to be unitary, giving the relation
\[\att{isoRel1}{.3} =  \att{isoRel2}{.3}.\]
Note that we are free to rescale this isomorphism by an element of $U(1)$ while preserving the above relation.
As $\rho$ is assumed to be simple, there exists a scaler $\alpha \in \mathbb{C}$ such that
\[\att{gMix1}{.3} = \alpha \att{gMix2}{.3}.\]
By composing this relation with its dagger and using the $\operatorname{Vec}(\mathbb{Z}_N)$ relations, we see that $\alpha \in U(1)$. We now use up the $U(1)$ gauge freedom in the isomorphism to set $\alpha = (-1)^{N+1}$.

The fusion rules gives that there is a scalar $\omega\in \mathbb{C}$ such that  \[\att{swap1}{.3} = \omega   \att{swap2}{.3}.\]
The following is also due to Izumi.

\begin{prop}\cite{Izumi}
The scalar $\omega$ is a primitive $N$-th root of unity.
\end{prop}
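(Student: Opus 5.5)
The plan is to prove $\omega^N = 1$ first, and then to rule out $\omega^d = 1$ for every proper divisor $d$ of $N$. Throughout I read the defining relation of $\omega$ as saying the following. Take the trivalent vertex $\att{T}{.25}$ coming from the Q-system. Attach a $g$-strand to one of its $\rho$-legs using the unitary isomorphism $\rho\otimes g\to\rho$. Sliding that attachment through the vertex to the other leg then multiplies the diagram by $\omega$. Both sides are morphisms in the space $\Hom(\rho\otimes\rho\to\rho)$, which has dimension $N$. The vertex $\att{T}{.25}$ is nonzero there, since by the $SO(3)$ relations its theta-graph evaluates to a nonzero multiple of $\delta$.

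\emph{Step 1: $\omega^N=1$.} Apply the sliding relation $N$ times. This moves a bundle of $N$ parallel $g$-strands from one leg of $\att{T}{.25}$ to the other and produces the factor $\omega^N$. Next fuse the $N$ strands on each side into the generator $g^{\otimes N}\to\mathbf{1}$, using the $\operatorname{Vec}(\mathbb{Z}_N)$ relations together with the unitarity of $\rho\otimes g\to\rho$. The mixed relation with the sign $(-1)^{N+1}$ is exactly what converts $N$ consecutive attachments into the identity on $\rho$. The same sign appears on both sides, so it cancels. What remains is $\omega^N\att{T}{.25}=\att{T}{.25}$, and since $\att{T}{.25}\neq 0$ we get $\omega^N=1$. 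The thing to check here is the bookkeeping of signs, namely that both sides acquire the same power of $(-1)^{N+1}$.

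\emph{Step 2: primitivity.} Suppose $\omega^d=1$ for some $d\mid N$ with $d<N$. Write $h=g^d$, which is nontrivial, and let $u_h:\rho\otimes h\to\rho$ be the composite of the unitary isomorphisms; it is unitary. Applying the sliding relation $d$ times shows that attaching an $h$-strand commutes with $\att{T}{.25}$ with no scalar. Two routes are available from here.

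\emph{First route (direct).} Define the vertex $t_k\in\Hom(\rho\otimes\rho\to\rho)$ with $g^k$ inserted on one leg, and compute the inner products $\operatorname{tr}(t_k^\dagger t_0)$. Sliding $g^k$ once around the closed diagram multiplies it by $\omega^k$. Hence the pairing vanishes unless $\omega^k=1$. When $\omega^k = 1$ it reduces to a bubble of $g^k$ on $\rho$, and the Schur relations force that bubble to vanish for $1\le k<N$. One would then want to derive a contradiction from the Gram matrix of the $t_k$ together with the $SO(3)$ relation.

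\emph{Second route (what I would try first).} Because $h$ commutes through the multiplication, the map $u_h$ together with the identity on $\mathbf{1}$ defines a morphism $(\mathbf{1}\oplus\rho)\otimes h\to h\otimes(\mathbf{1}\oplus\rho)$ that is compatible with the algebra structure. This equips $h$ with the structure of an invertible $A$--$A$ bimodule, where $A=\mathbf{1}\oplus\rho$, lying in the component of $A$ itself.

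\emph{Step 3: counting.} The aim is to show that such nontrivial invertible bimodules cannot exist. The dual category ${}_A\mathcal{D}_A$ has the same global dimension as $\mathcal{D}$. By Izumi's analysis of $2_1^{G}1$ subfactors, its invertible part and its decomposition are determined: the bimodules are $A$, a $\mathbb{Z}_N$ family coming from $g\otimes A$, and the remaining objects. The extra identification of $h\otimes A$ with a twist of $A$ would make $h\otimes A\cong A$ as bimodules. This would force $\dim\Hom_{A\text{-}A}(A\to h\otimes A)\neq 0$. But by Frobenius reciprocity that space equals $\dim\Hom_{\mathcal{D}}(\mathbf{1}\to h\otimes A)=\dim\Hom(\mathbf{1}\to h\oplus\rho)=0$, which is a contradiction.

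I expect Step 2 to be the main obstacle: correctly turning "commutes with the multiplication" into an honest bimodule isomorphism, or equivalently into an algebra automorphism of $A$. The subtlety is the orientation and dagger conventions in the sliding relation. If the bimodule route becomes messy, I would fall back on the Gram-matrix route. There, the Schur relations in Definition~\ref{def:NGskein} are clearly designed to supply exactly the needed vanishing.
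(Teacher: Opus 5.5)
\textbf{There is a genuine gap, and it begins with the relation you take as the definition of $\omega$.}

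In the paper, $\omega$ is defined by a relation in the one-dimensional space $\Hom(\rho\otimes g\to g\otimes\rho)$; this is why ``the fusion rules give'' a scalar. The relation compares the two orders in which a single $\rho$ strand can absorb a $g$ from the right and emit a $g$ to the left. No trivalent $\rho$-vertex occurs. In $\operatorname{GPA}(\Gamma_N)$ it becomes $\Theta(1,v)\Theta(v,0)^{-1}=\omega\,\Theta(0,v)\Theta(v+1,0)^{-1}$, and it is exactly Izumi's commutation scalar $\chi_1(1)$.

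Your relation, that sliding a $g$-attachment through the vertex $T$ multiplies by $\omega$, is a different statement. It is not forced by the fusion rules, since $\Hom(\rho\otimes\rho\otimes g\to\rho)$ is $N$-dimensional, and it is false in general. Moving a $g$ across the vertex is what the change-of-basis relation with the vector $\vec r$ records. Concretely, in the paper's embeddings into $\operatorname{GPA}(\Gamma_N)$, compare $u\circ(T\otimes\mathrm{id}_g)$ with $T\circ(\mathrm{id}_\rho\otimes u)$ on the basis vectors with source paths $0\to *\to 0\to 1$ and $0\to *\to 1\to 2$ (last edge red). Their coefficient ratios are $P_0/[2]$ and $P_1/\omega$, which have different moduli.

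So Steps 1 and 2 as written rest on a relation that does not hold. Step 1 can be repaired using the correct relation. Commute one left emission past $N$ consecutive right absorptions, then collapse those absorptions with the absorption relation. The sign $(-1)^{N+1}$ appears on both sides, and $\omega^N=1$ follows.

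\textbf{Primitivity, which is the real content of the proposition, is not proved.} Route one stops at ``one would then want to derive a contradiction.'' Route two asserts, without argument, that a commuting $h$ forces $h\otimes A\cong A$ as bimodules. The contradiction you then derive is already visible on underlying objects, since $h\oplus\rho\not\cong\mathbf{1}\oplus\rho$. So all of the content sits in that unjustified step. In addition, Frobenius reciprocity only gives an inclusion $\Hom_{A\text{-}A}(A\to X)\subseteq\Hom(\mathbf{1}\to X)$, not an equality, because $A$ is not the free bimodule.

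The missing ingredient is the non-degeneracy of the pairing $\chi_a(b)$, obtained by commuting an $a$-attachment past a $b$-attachment on the two sides of a $\rho$ strand. The paper does not prove this itself. It imports from \cite[Theorem 3.9]{Izumi} that $\chi$ is a non-degenerate symmetric bicharacter on $\mathbb{Z}_N$, and then $\omega=\chi_1(1)$ is primitive. A self-contained proof would have to supply an argument of that kind for the correct pairing.
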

\begin{proof}
This is shown by Izumi in \cite[Theorem 3.9]{Izumi}. More precisely he defines a function he denotes $\chi_a(b) :\mathbb{Z}_N \times \mathbb{Z}_N \to \mathbb{C}$ given by the scalar obtained by commuting an $a$-group strand connected on a $\rho$ strand, past a $b$-group strand. The cited theorem shows that this function is a non-degenerate symmetric bicharacter, and it follows that $\omega = \chi_1(1)$ is a primitive $N$-th root of unity.
\end{proof}

Finally from the fusion rules of $\mathcal{D}$ we have that $\dim\Hom(\rho^2 \to \rho) = N$. The following result shows that we have a natural basis for this space. We prove this result in slightly greater generality than is currently needed, as we will need to apply this result to two different categories.

\begin{lem}\label{lem:inde}
   Let $\mathcal{E}$ be a unitary tensor category with generating objects $\rho, g$, morphisms as in the defining morphisms of $\mathcal{C}(\mathbb{Z}_N, \omega , \vec{r})$, and relations as in the defining relations of $\mathcal{C}(\mathbb{Z}_N, \omega , \vec{r})$, sans the final mixed relation. Then
   \[ \left\{\att{ti}{.3}: 0 \leq i < N \right\} \subset \Hom_{\mathcal{E}}(\rho\otimes \rho \to \rho) \]
   are linearly independent.
\end{lem}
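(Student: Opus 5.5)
We need to guess what the morphisms t_i are. Most likely $t_i$ is the trivalent vertex $\rho\otimes\rho\to\rho$ with $i$ copies of the $g$-strand attached: concretely, the composite $\rho\otimes\rho \to \rho\otimes g^{i}\otimes\rho \to \rho$, where the $g^{\otimes i}$ strand is absorbed on one side via the isomorphism $\rho\otimes g\to\rho$, and the result is then fed through the trivalent vertex. The final Schur relations say that certain diagrams with a $g^{i}$ strand running between $\rho$-strands vanish for $1\le i<N$. These encode $\Hom(g^i\to \mathbf 1)=0$ type orthogonality.

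Plan: show that the Gram matrix $\langle t_i,t_j\rangle := \operatorname{tr}(t_j^\dagger\circ t_i)$ is diagonal with nonzero diagonal entries. Since $\mathcal{E}$ is unitary, its inner product is positive definite, so a set of nonzero pairwise orthogonal vectors is linearly independent. This reduces the lemma to two computations.

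\textbf{Step 1: diagonal entries.} Compute $t_i^\dagger\circ t_i$. Using the unitarity relation of the $\rho g$-isomorphism, the $g$-strands cancel against their daggers. This leaves the bubble of two trivalent vertices, which the $SO(3)$ relations evaluate to a positive multiple of $\mathrm{id}_\rho$. Hence $\operatorname{tr}(t_i^\dagger t_i)>0$, independent of $i$.

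\textbf{Step 2: off-diagonal entries.} For $i\neq j$, form $t_j^\dagger\circ t_i$. Then:
\begin{itemize}
\item Use the unitarity relation to cancel $\min(i,j)$ of the $g$-strands.
\item This leaves a diagram in which a $g^{\otimes k}$ strand, $k\equiv i-j \pmod N$ with $1\le k<N$, is attached to a $\rho$-strand and runs across the bubble.
\item Move the attachment points using the commutation relation with $\omega$ and the relation $(-1)^{N+1}$ (the latter relates attaching $g$ on the left versus the right, and the $g^{\otimes N}\to\mathbf 1$ cap), together with the $\operatorname{Vec}(\mathbb Z_N)$ relations that rewrite $g$-strands modulo $N$.
\end{itemize}
The aim is to bring the diagram into exactly the form of one of the Schur relations, which forces it to vanish.

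If the diagram does not directly match the Schur relations, fall back on a trace argument. Taking the trace, the closed diagram is a scalar multiple of $\operatorname{tr}$ of a morphism $\rho\to\rho$ containing a $g^k$ strand. Sliding the $g$-endpoint around the closed $\rho$ loop via pivotality picks up a factor $\omega^{k}$, so $(1-\omega^k)\cdot(\text{value})=0$. Since $\omega$ is a primitive $N$-th root of unity (or, in $\mathcal E$, using whichever Schur relation supplies it), the value vanishes for $k\not\equiv 0$.

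\textbf{Main obstacle.} The hard part is the isotopy bookkeeping in Step 2: correctly tracking the $(-1)^{N+1}$ and $\omega$ factors as $g$-strands are moved, and confirming that the resulting picture matches one of the Schur relations rather than its rotation or dagger. I would handle the latter using pivotality and the dagger structure, since the relations are preserved under both.
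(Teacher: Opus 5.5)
Your plan breaks down at Step 2, because the $t_i$ are not pairwise orthogonal. No amount of isotopy bookkeeping will make the off-diagonal pairings vanish.

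After cancelling the common $g$-strands, $\operatorname{tr}(t_i\circ t_j^\dagger)$ becomes the theta graph with a single $g^{k}$ chord, $k\equiv i-j\not\equiv 0$, joining two distinct edges across a face. Neither Schur relation applies to this diagram. Those relations kill a $\rho$-loop, respectively a tadpole, with $g$-strands running off to the outside; your chord instead joins two different edges of the theta.

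Your fallback phase argument is also unavailable. The $\omega$-relation exchanges the order of two $g$-attachments along a $\rho$-strand. Carrying a single endpoint around a cycle of the theta graph would require pushing it through a trivalent vertex, and that is what the omitted change-of-basis relation governs.

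In fact the chord theta can be evaluated, and it is nonzero. Write it as $\operatorname{tr}(E_k\circ T^\dagger T)$, where $E_k\in\End(\rho\otimes\rho)$ is the chord. The $I=H$ relation gives $T^\dagger T=H+\frac{1}{\delta-1}\bigl(\mathrm{id}_{\rho\otimes\rho}-\text{cup}\circ\text{cap}\bigr)$, and the three terms close up as follows:
\begin{itemize}
\item the $H$-term becomes two tadpoles with joined stems and a $g^k$ strand between their loops, which vanishes by the tadpole Schur relation;
\item the identity term becomes two $\rho$-loops joined by a $g^k$ strand, which vanishes by the loop Schur relation;
\item the cup--cap term becomes a single $\rho$-loop on which the chord forms a bigon, contributing $\delta$.
\end{itemize}
So each off-diagonal pairing is $-\frac{\delta}{\delta-1}=\frac{\delta}{1-\delta}\neq 0$, which is what the paper finds.

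The missing idea is to compute the full Gram matrix rather than expect it to be diagonal. Your Step 1 correctly gives the diagonal entries $\delta$, so $B=\frac{\delta^2}{\delta-1}I-\frac{\delta}{\delta-1}J$, with $J$ the all-ones matrix. Using $\delta^2=N\delta+N$, its eigenvalues are $\frac{\delta^2}{\delta-1}$ on the vectors $e_a-e_b$ and $\frac{\delta^2-N\delta}{\delta-1}=\frac{N}{\delta-1}$ on $e_1+\cdots+e_N$. Both are nonzero, so $B$ is invertible. Pairing a relation $\sum_i c_i t_i=0$ against each $t_j$ then gives $Bc=0$, hence $c=0$. This is the paper's argument.
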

\begin{proof}
    Consider the $N\times N$ matrix $B$ of pairings with respect to the non-degenerate inner product on $\CC({\Z_N},\omega, \vec{r})$. That is
    \[  B_{i,j}  := \operatorname{tr}\left(\att{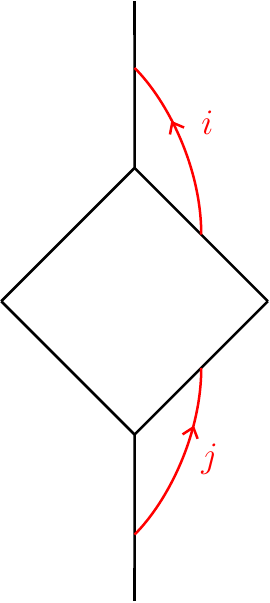}{.3}\right). \]
    Is is a straightforward exercise in skein theory to compute that the diagonal entries of $B$ are all $\delta$, and non-diagonal entries are all $\frac{\delta}{1-\delta}$. One now readily checks that the vectors
    \[
    e_1-e_2, \quad \dots, \quad e_{N-1}-e_N
    \]
    are linearly independent eigenvectors with eigenvalue $\delta +\frac{1}{\delta-1}$, and that
    \[
    e_1+\cdots+e_N
    \]
    is an eigenvector with eigenvalue $\delta-\frac{1}{\delta-1}$. Thus the matrix $B$ admits a basis of eigenvectors, and is therefore nonsingular. It follows as $\mathcal{E}$ is unitary that the morphisms
    \[ 
    \left\{\att{ti}{.3}: 0 \leq i < N \right\} \subset \Hom_{\CC({\Z_N},\omega, \vec{r})}(\rho\otimes \rho \to \rho) 
    \]
    are linearly independent as claimed.
\end{proof}

In the case of the category $\mathcal{D}$, the above lemma gives a basis for $\Hom_{\mathcal{D}}(\rho\otimes \rho \to \rho)$. It then follows that there exists $\vec{r} \in \mathbb{C}^N$ such that 
\[\att{T1S}{.3} = \sum_{i = 0}^{N-1}   \vec{r}_i \att{Ti}{.3}.\]

Finally by Schurs Lemma in the category $\mathcal{D}$, we have the relations
\[        \att{schur0}{.3}\quad = \quad 0,\qquad     \att{schur1}{.3} \quad = \quad 0          \]
for all $1\leq i < N$.


To summerise the results of this section up to this point, we have shown that $\mathcal{D}$ satisfies the same relations as the defining relations of our category $ \mathcal{C}(\mathbb{Z}_N, \omega, \vec{r})$ for some parameter choices. This implies the existence of a functor, which we now show descends to a dominant, full, and faithful functor out of the semisimplification of $ \mathcal{C}(\mathbb{Z}_N, \omega, \vec{r})$.

\begin{thm}
There exists a primitive $N$-th root of unity, and a $\vec{r} \in \mathbb{C}^N$ such that there is a dominant, full, and faithful monoidal $\dag$-functor
\[  \overline{  \mathcal{C}(\mathbb{Z}_N, \omega, \vec{r})    } \to \mathcal{D}.   \]
\end{thm}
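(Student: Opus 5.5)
The plan is to first produce a pivotal $\dag$-functor $\mathcal{F}: \mathcal{C}(\mathbb{Z}_N, \omega, \vec{r}) \to \mathcal{D}$ from the universal property of the generators-and-relations category. Then descend $\mathcal{F}$ to the semisimplification using Proposition~\ref{prop:descent}. Finally, prove fullness and dominance using Proposition~\ref{prop:ssFull} together with Proposition~\ref{prop:uni}.

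Concretely, take $\omega$ to be the primitive $N$-th root of unity given by Izumi's result and $\vec{r}$ the coefficients just obtained from Lemma~\ref{lem:inde}. Send the generators to the corresponding morphisms in $\mathcal{D}$:
\begin{itemize}
\item the trivalent vertex goes to $\pi\circ m\circ(\iota\otimes\iota)$;
\item the group generator goes to the $\operatorname{Vec}(\mathbb{Z}_N)$ generator;
\item the mixed vertex goes to the unitary isomorphism $\rho\otimes g\to\rho$, normalised so that $\alpha=(-1)^{N+1}$.
\end{itemize}
The discussion preceding the theorem verifies every defining relation in $\mathcal{D}$:
\begin{itemize}
\item the $SO(3)$ relations come from the $SO(3)_{q_N}$ functor;
\item the $\operatorname{Vec}(\mathbb{Z}_N)$ relations come from Proposition~\ref{lem:subcat1};
\item the mixed relations come from unitarity, Schur's lemma, the choice of $\omega$ and the choice of $\vec{r}$.
\end{itemize}
One should also check that the pivotal structure and self-duality of $\rho$ are respected. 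This holds because $\rho$ is symmetrically self-dual in $\mathcal{D}$, as the Frobenius algebra structure provides a real self-duality whose Frobenius–Schur indicator is $+1$. Since $\mathcal{D}$ is unitary, Proposition~\ref{prop:descent} gives a faithful pivotal functor $\overline{\mathcal{F}}: \overline{\mathcal{C}(\mathbb{Z}_N,\omega,\vec{r})} \to \mathcal{D}$, which is compatible with $\dag$.

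For fullness, apply Proposition~\ref{prop:ssFull} to $\overline{\mathcal{F}}$, using that $\mathcal{D}$ is finitely semisimple. We must exhibit, for each simple $Y\in\mathcal{D}$, an object and a projection whose image under $\overline{\mathcal{F}}$ projects onto $Y$:
\begin{itemize}
\item for the invertible object $g^k$, take the object $g^{\otimes k}$ with the identity projection, since $\overline{\mathcal{F}}(g)^{\otimes k}\cong g^k$ is already simple;
\item for $\rho$, take the object $\rho$ with the identity projection, which is fine since $\rho$ is simple in $\mathcal{D}$.
\end{itemize}
Hence $\operatorname{Ab}(\overline{\mathcal{F}})$ is essentially surjective. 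By the argument in Proposition~\ref{prop:ssFull} it is then full, and restricting to $\overline{\mathcal{C}}$ gives fullness of $\overline{\mathcal{F}}$ itself. Dominance is exactly the essential surjectivity just shown: every simple of $\mathcal{D}$ appears as a summand of an image.

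The main obstacle is the first step. One must make sure that the relation list, in particular the pivotal and self-duality conventions and the normalisation of the trivalent vertex (the loop value $\delta$ and the bigon), exactly matches what the Q-system gives in $\mathcal{D}$. This requires fixing the scalar normalisation of $\iota$ and of $m$ so that the $SO(3)_{q_N}$ functor really sends the generator to a vertex satisfying the stated relations. I would handle this by normalising the Q-system as a special standard Q-system, with $m\circ m^\dag=\sqrt{d}\,\mathrm{id}$ for $d=1+\delta$, and rescaling the vertex accordingly. The remaining steps are formal consequences of the earlier propositions.
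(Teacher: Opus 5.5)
Your outline is the paper's: build $\mathcal{F}$ from the relations, descend with Proposition~\ref{prop:descent}, and get fullness from Proposition~\ref{prop:ssFull}. The only difference is that you reach $g^k$ via $\mathrm{id}_{g^{\otimes k}}$ instead of the paper's trace-one projections $p_{g^i}\in\operatorname{End}(\rho\otimes\rho)$, which meets that proposition's hypotheses more cheaply.

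\textbf{The gap is in the fullness step.} Exhibiting a projection onto every simple of $\mathcal{D}$ only makes $\operatorname{Ab}(\overline{\mathcal{F}})$ essentially surjective. The principle you then invoke is that a faithful, essentially surjective additive functor into a finitely semisimple category is full; this is exactly Proposition~\ref{prop:ssFull}, and it is false. The fibre functor $\operatorname{Rep}(H)\to\operatorname{Vec}$ for a nontrivial finite group $H$ satisfies every hypothesis and is not full. Your version exposes the failure inside this very setting. It never uses the mixed vertex or the change-of-basis relation, so it would equally "prove" that the pivotal subcategory of $\mathcal{D}$ generated by the trivalent vertex and the group generator alone is full. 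Yet for $N\ge 2$ that subcategory has no nonzero morphism $\rho\otimes g\to\rho$, since every diagram in those generators preserves the $\mathbb{Z}_N$-degree carried by $g$-strands. Fullness actually needs simples of $\operatorname{Ab}(\overline{\mathcal{C}})$ to go to simples, and non-isomorphic ones to non-isomorphic ones. The paper's proof routes fullness through the same proposition, so it needs the same repair.

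\textbf{What is missing is an identification of the simples} of $\mathcal{A}:=\operatorname{Ab}(\overline{\mathcal{C}(\mathbb{Z}_N,\omega,\vec r)})$. Here is one way to supply it.
\begin{enumerate}
\item Because $\overline{\mathcal{F}}$ is a faithful pivotal $\dag$-functor into $\mathcal{D}$, the endomorphism algebras of $\overline{\mathcal{C}}$ are finite-dimensional $C^*$-algebras. So $\mathcal{A}$ is semisimple, $\operatorname{Ab}(\overline{\mathcal{F}})$ is faithful and dimension-preserving, and nonzero objects of $\mathcal{A}$ have positive dimension.
\item Faithfulness makes $\rho$, $\rho\otimes g$, $g\otimes\rho$ and every $g^{\otimes k}$ simple in $\mathcal{A}$, with $\rho$ and the $g^{\otimes k}$ for $0\le k<N$ pairwise non-isomorphic. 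The mixed vertex, a rotation of it, and the group generator are nonzero maps between simples, so $\rho\otimes g\cong\rho\cong g\otimes\rho$ and $g^{\otimes N}\cong\mathbf{1}$.
\item Lemma~\ref{lem:inde} gives $\rho$ multiplicity at least $N$ in $\rho\otimes\rho$.
\item Apply $k$ daggered mixed vertices to the right strand and then cap, giving $\rho\otimes\rho\to\rho\otimes\rho\otimes g^{\otimes k}\to g^{\otimes k}$. This is sent by $\mathcal{F}$ to a nonzero map, so each $g^{\otimes k}$ is a summand of $\rho\otimes\rho$.
\item Now $\delta^2=N\delta+N$ and positivity force $\rho\otimes\rho\cong\bigoplus_k g^{\otimes k}\oplus N\rho$. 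Hence every word in $\rho,g$ decomposes into $\rho$ and the $g^{\otimes k}$, and these are all the simples of $\mathcal{A}$. This is essentially the argument of Theorem~\ref{thm:karCom}, which applies because $\overline{\mathcal{C}}$ is unitary.
\item So $\operatorname{Ab}(\overline{\mathcal{F}})$ is a bijection on isomorphism classes of simples, and Schur's lemma plus faithfulness give fullness.
\end{enumerate}

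\textbf{A second omission.} Proposition~\ref{prop:descent} needs $\mathcal{C}(\mathbb{Z}_N,\omega,\vec r)$ to be spherical with one-dimensional $\operatorname{End}(\mathbf{1})$, i.e.\ every closed diagram must evaluate to a scalar. Otherwise $\operatorname{Neg}$ can be strictly smaller than $\ker\mathcal{F}$, and $\overline{\mathcal{F}}$ need not be faithful. This is not automatic for a presentation. The paper gets it from \cite[Corollary 1]{Caleb}, and your proposal does not address it.
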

\begin{proof}
The previous results of this section show that there is a monoidal $\dag$-functor $\mathcal{F}:\mathcal{C}(\mathbb{Z}_N, \omega, \vec{r}) \to  \mathcal{D}$ defined in the canonical way. As every object in $\mathcal{D}$ is a summand of $\rho^{\otimes n}$ for some $n$ the functor $\mathcal{F}$ is dominant.

By \cite[Corollary 1]{Caleb} we have that $\dim\Hom_{\mathcal{C}(\mathbb{Z}_N, \omega, \vec{r}) }(\mathbf{1}\to \mathbf{1} ) \leq1$, and it follows by Proposition~\ref{prop:descent} that $\mathcal{F}$ descends to a dominant faithful monoidal $\dag$-functor 
\[ \overline{\mathcal{F}}: \overline{  \mathcal{C}(\mathbb{Z}_N, \omega, \vec{r})    } \to \mathcal{D}.   \]

To show this functor is full we will apply Proposition~\ref{prop:ssFull}. We define in $\mathcal{C}(\mathbb{Z}_N, \omega, \vec{r})$ the projections
\[   p_{g^{i}} := \frac{1}{\delta}\att{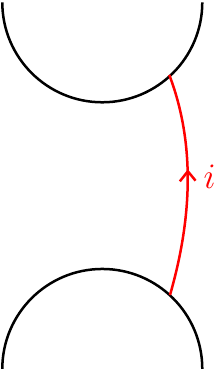}{.4}  \in    \operatorname{End}_{\CC({\Z_N},\omega, \vec{r})}(\rho \otimes \rho)  \]
and note the image of these projections under $\mathcal{F}$ in $\mathcal{D}$ project onto the simple objects $g^i$. As the trace of the $p_{g^{i}}$ is 1 in $\CC({\Z_N},\omega, \vec{r})$, we have that they survive in the semisimple quotient. In a similar fashion, we have that the identity morphism on $\rho$ in $\CC({\Z_N},\omega, \vec{r})$ is mapped under $\mathcal{F}$ to the identity on $\rho$, and hence projects onto $\rho$. The trace of this morphism is $\delta \neq0$, and so it also survives the semisimple quotient. As $\operatorname{Irr}(\mathcal{D}) = \{  g^i : 0\leq i < N   \}\cup \{\rho\}$ this implies fullness of $\overline{\mathcal{F}}$ via Proposition~\ref{prop:ssFull}.
\end{proof}

The preceding result shows that for any $\mathbb{Z}_N$-near group category $\mathcal{D}$, there exist parameter choices so that $\mathcal{C}(\mathbb{Z}_N, \omega, \vec{r})$ is a presentation for $\mathcal{D}$. To finish up we show that for any parameter choice, so long as $\CC({\Z_N},\omega, \vec{r})$ exists and the semi-simplification is unitary (which we will have for free in our GPA construction set-up), then the Cauchy completion of $\overline{\CC({\Z_N},\omega, \vec{r})}$ gives a $\mathbb{Z}_N$-near group category.
\begin{thm}\label{thm:karCom}
   Let $N \in \N$, $\zeta$ a $N$-th root of unity, and $\vec{r} \in \mathbb{C}^N$ be such that $\overline{\CC({\Z_N},\omega, \vec{r})}$ is non-trivial and unitary. Then 
   \[ K_0(\Ab(\overline{\CC({\Z_N},\omega, \vec{r})})   \cong R(\Z_N, N). \]
\end{thm}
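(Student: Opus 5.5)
Write $\overline{\CC}:=\overline{\CC({\Z_N},\omega, \vec{r})}$. The plan is to find the simple objects of $\Ab(\overline{\CC})$ explicitly, show that every object is a sum of them, and then read off the fusion rules from the relations.

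\textbf{Semisimplicity.} Since $\overline{\CC}$ is unitary, every endomorphism algebra is a finite-dimensional $C^*$-algebra, provided the hom spaces are finite dimensional. So the first step is to bound $\dim\Hom(\mathbf{1}\to X)$ for every word $X$ in $\rho, g$, in the spirit of the evaluation argument of \cite[Corollary 1]{Caleb}.
\begin{itemize}
\item The mixed relations let us slide every $g$-strand to the boundary, and the isomorphism $\rho\otimes g\to\rho$ absorbs $g$'s into $\rho$'s.
\item The final mixed relation re-expresses the twisted trivalent vertex in terms of the basis $\{\text{vertex with } g^i \text{ attached}\}$.
\item The $SO(3)$ relations then reduce any closed or open diagram to a finite spanning set of reduced trees.
\end{itemize}
Finite-dimensional $C^*$-algebras are semisimple, so idempotent completion produces a semisimple category, and $\Ab(\overline{\CC})$ is a unitary fusion category once we know it has finitely many simples.

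\textbf{Identifying simples.} The candidates are the invertibles $g^i$, for $0\le i<N$, and $\rho$.
\begin{enumerate}
\item The $\operatorname{Vec}(\Z_N)$ relations give $g^{\otimes N}\cong\mathbf{1}$. Non-triviality, together with $\dim\End(\mathbf{1})\le 1$, makes $\mathbf{1}$ simple, so each $g^i$ is an invertible simple.
\item The Schur relations, which state that $\Hom(g^i\to\mathbf{1})$ is $0$ for $1\le i<N$ in the appropriate sense, show the $g^i$ are pairwise non-isomorphic.
\item For $\rho$: $\End(\rho)$ is spanned by reduced diagrams. Using the $SO(3)$ relations (the loop, the $\rho$-tadpole vanishing, and the bigon relation giving a multiple of the identity), I expect $\End(\rho)$ to be one-dimensional, so $\rho$ is simple with $\dim\rho=\delta$. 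Since $\delta$ is irrational (or just $\ne1$), $\rho$ is not isomorphic to any $g^i$.
\end{enumerate}

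\textbf{Fusion rules.} The relation $\rho\otimes g\cong\rho$ is given by the unitary isomorphism generator, and duality gives $g\otimes\rho\cong\rho$. For $\rho\otimes\rho$:
\begin{itemize}
\item The projections $p_{g^i}$ of trace $1$ from the previous theorem are mutually orthogonal by the Schur relations.
\item Lemma~\ref{lem:inde}, which applies in $\overline{\CC}$ since it is unitary, gives $N$ linearly independent maps $\rho\otimes\rho\to\rho$. The dual maps yield $N$ orthogonal copies of $\rho$ after Gram–Schmidt using the nondegenerate matrix $B$.
\end{itemize}
This shows $\rho\otimes\rho\supseteq\bigoplus_i g^i\oplus N\rho$. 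Comparing dimensions,
\[ \delta^2 = N+N\delta, \]
which holds by the choice of $\delta$. Positivity of traces in a unitary category then forces equality, so no further summand exists.

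Finally, every word in $\rho,g$ decomposes into the simples $g^i$ and $\rho$ by induction on length, so these are all the simples. This gives the based isomorphism $K_0\cong R(\Z_N,N)$.

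The main obstacle is the claim that $\End(\rho)$ is one-dimensional in $\overline{\CC}$, and more generally the finiteness of hom spaces. This requires a genuine diagram-reduction argument using the final mixed relation, and I would try to import it from \cite{Caleb}'s evaluation algorithm.
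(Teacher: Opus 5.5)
Your overall strategy is the paper's: semisimplicity from unitarity, the lower bound $\bigoplus_i g^i\oplus\rho^{\oplus N}\subseteq\rho\otimes\rho$ from the trace-one projections $p_{g^i}$ and Lemma~\ref{lem:inde}, equality from $\delta^2=N+N\delta$ and positivity of dimensions, and then induction on tensor powers. For semisimplicity the paper does not attempt a diagram-reduction bound on hom spaces. It reads semisimplicity off the unitarity hypothesis; in the intended application, finite-dimensionality comes from the faithful functor $\overline{\CC}\to\GPA(\Gamma_N)$. So that part of your plan is extra work rather than a necessity.

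The step you flag is a genuine hole in your proposal: you never prove $\End_{\overline{\CC}}(\rho)=\mathbb{C}\,\mathrm{id}_\rho$, and three later steps depend on it.
\begin{enumerate}
\item[(a)] $N$ linearly independent maps $T_i:\rho\otimes\rho\to\rho$ give $\rho^{\oplus N}\subseteq\rho\otimes\rho$ only if $\rho$ is simple. Likewise your Gram--Schmidt via $B$ yields orthogonal isometries only if the composites $T_iT_j^\dagger\in\End(\rho)$ are scalars, since $B$ records only their traces.
\item[(b)] The vanishing of the particular Schur diagrams does not by itself give $\Hom(g^i\to\mathbf{1})=0$. What does give it is the following. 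By Frobenius reciprocity, $\mathbf{1}$ occurs in $\rho\otimes\rho$ with multiplicity $\dim\End(\rho)$. The projections $p_{g^0}$ and $p_{g^i}$ are orthogonal by the first Schur relation. So if that multiplicity is $1$, the image of $p_{g^i}$ cannot also be $\cong\mathbf{1}$.
\item[(c)] The closing dimension count cannot recover simplicity, because the lower bound it is compared against was derived from it.
\end{enumerate}

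The paper's proof uses exactly the same fact tacitly, in ``by semisimplicity it follows that $\rho^{\oplus N}\subseteq\rho\otimes\rho$'' and in the ``direct computation'' that the $p_{g^i}$ are pairwise non-isomorphic. It supplies no argument either, so your proposal is no less complete than the published proof. To actually close the gap you need what you describe: a reduction showing that every diagram in $\End_{\CC}(\rho)$ is a multiple of $\mathrm{id}_\rho$ modulo negligible morphisms. Equivalently, by rotation, $\Hom_{\CC}(\mathbf{1}\to\rho\otimes\rho)$ should be spanned by the cup modulo negligibles. One route is to extend the evaluation argument behind \cite[Corollary 1]{Caleb} from closed diagrams to diagrams with two $\rho$ boundary points.
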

\begin{proof}
    As $\overline{\CC({\Z_N},\omega, \vec{r}))}$ in unitary, we have that $\Ab(\overline{\CC({\Z_N},\omega, \vec{r})})$ is unitary, and in particular semisimple. Hence $\rho \otimes \rho$ decomposes as a direct sum of simples.

    From Lemma~\ref{lem:inde}, we have that $\dim\Hom_{\Ab(\overline{\CC({\Z_N},\omega, \vec{r})})}(\rho\otimes \rho\to \rho) \geq N$. By semisimplicity it follows that
    \[    \rho^{\oplus N} \subseteq  \rho\otimes \rho.\]

    By direct computation we have that
    \[   p_{g^{i}} := \frac{1}{\delta}\att{proji}{.4}  \in    \operatorname{End}_{\overline{\CC({\Z_N},\omega, \vec{r})}}(\rho \otimes \rho)  \]
    are pairwise non-isomorphic minimal projections. Another direct computation gives that
    \[\dim\Hom_{\Ab(\overline{\CC({\Z_N},\omega, \vec{r})})}(X \otimes X \to p_{g^{i}}) \geq 1. \]
    Together we have
    \[  \bigoplus_{g \in \Z_N} p_g \oplus \rho^{\oplus N} \subseteq \rho \otimes \rho.    \]
    A final direct computation gives that $\dim(p_{g^i}) = 1$. This shows that the quantum dimension of the both sides of the above equation agree, and hence it is an isomorphism (as any other simple summand would contribute positive quantum dimension by unitarity).

    Furthermore, we have that
    \[  \att{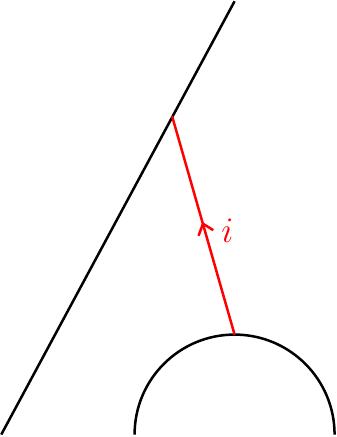}{.3} \in  \Hom_{\Ab(\overline{\CC({\Z_N},\omega, \vec{r})})}(\rho \otimes p_g \to \rho) \]
    is an isomorphism. Hence $\rho\otimes p_g \cong \rho$. It is now a straightforward induction to show that the only simple summands of $\rho^{\otimes n}$ are $\{ p_g : g\in \Z_N\}$ and $\rho$.
\end{proof}
   
\section{The Graph Planar Algebra Embeddings}\label{sec:gpa}

In this section we aim to give graph planar algebra embeddings for the 
$\mathcal{C}(\mathbb{Z}_N, \omega, \vec{r})$ skein theory developed in the previous section. 
The main goal of this section will be to produce elements in a certain coloured graph planar algebra 
that satisfy all of the defining relations of $\mathcal{C}(\mathbb{Z}_N, \omega, \vec{r})$ bar the final mixed relation. 
This final relation will be dealt with in the next section.

The first order of business is to determine a coloured graph whose graph planar algebra we will attempt to embed our diagrammatic category 
$\mathcal{C}(\mathbb{Z}_N, \omega, \vec{r})$ into. 
As this category has two object generators, $\rho$ and $g$, we will have to embed this category 
into a graph planar algebra of a graph with two colours. 
We will refer to the the $\rho$ colour of this graph as black, and the $g$ colour of this graph as red. 
As we know by Theorem~\ref{thm:karCom} that $\mathcal{C}(\mathbb{Z}_N, \omega, \vec{r})$ is a presentation 
for a fusion category with $R(\mathbb{Z}_N, N)$ fusion rules, the obvious choice is the $(\rho, g)$ fusion graph of 
$R(\mathbb{Z}_N, N)$ (drawn here in the case of $N=5$):

\[
    \begin{tikzpicture}[scale=1, vertex/.style={circle, draw, minimum size=5mm}]
        \node[vertex] (1) at (90:0)   {$\rho$};
        \node[vertex] (2) at (72:2)   {$\mathbf{1}$};
        \node[vertex] (3) at (144:2)  {$g^1$};
        \node[vertex] (4) at (216:2)  {$g^2$};
        \node[vertex] (5) at (288:2) {$g^3$};
        \node[vertex] (6) at (360:2)  {$g^4$};

        \draw (1) edge["5", loop, out = 300, in =210, looseness = 7]  (1) ;
        \draw (1) edge[red,loop] (1);
        \draw (1) edge  (2);
        \draw (2) edge[red,->] (3);
        \draw (1) edge  (3);
        \draw (3) edge[red,->] (4);
        \draw (1) edge  (4);
        \draw (4) edge[red,->] (5);
        \draw (1) edge  (5);
        \draw (5) edge[red,->] (6);
        \draw (1) edge  (6);
        \draw (6) edge[red,->] (2); 
    \end{tikzpicture}
\]

While we are guaranteed an embedding of $\mathcal{C}(\mathbb{Z}_N, \omega, \vec{r})$ (if it exists), 
into the graph planar algebra of this graph, this graph planar algebra is not amenable to computation due to 
the multiplicity on the black $\rho \to \rho$ edge. 
This results in a large amount of gauge freedom in our solution space. 
To obtain a graph more friendly to computation, we recall from Proposition~\ref{lem:subcat1} 
that the pointed subcategory of a near-group category always has trivial associator. 
This gives us that $A:= \oplus_{g \in \mathbb{Z}_N} g$ is an algebra object. 
We can then take our graph to be the $(\rho, g)$-coloured module fusion graph for the category of $A$-modules. 
An involved calculation using structure results of Izumi (which boils down to the scalar $\omega$ being a primitive $N$-th root of unity) gives the following graph.

\begin{defn}
    Let $N \in \mathbb{N}$, and $\mathcal{I} = \{ \text{black}, \text{red}    \}$. 
    We define the $\mathcal{I}$-coloured graph $\Gamma_N$ as the graph whose vertices are the labels $* , 0,1, \cdots, N-1$. 
    The black edges are
    \[  i \to j = \begin{cases}
    0 & \text{ if $i=*$ and $j=*$}\\
    1 & \text{ otherwise}\\
    \end{cases}         \]
    The red edges are all zero apart from
    \[ * {\color{red}\to} *, \quad  0 {\color{red}\to} 1 , \quad  1 {\color{red}\to} 2, \quad \cdots,\quad  N-2 {\color{red}\to} N-1, \quad \quad  (N-1) {\color{red}\to} 0.  \]
\end{defn}
 The black normalised Frobenius-Perron eigenvector of the graph $\Gamma_N$ is $\lambda_N := (z,1,,1,\dots,1)^T \in \C^N, \quad z := \frac{-N +\sqrt{N(N+4)}}{2}$, and the red normalised Frobenius-Perron eigenvector is identically $1$.

 For ease of notation we will assume the indices $\{0,1,\cdots, N-1\}$ are taken mod $N$, so that $N=0$. This will significantly simplify certain expressions. We will also formally enforce that $*+1 = *$, again for convenience of notation.

For an example $\Gamma_5$ is the graph in Equation~\eqref{eq:exGraph} in the introduction.

Defining the functor $\mathcal{J}: \mathcal{C}(\mathbb{Z}_N, \omega, \vec{r})\to GPA(\Gamma_N)$ on objects is simple. 
We map $\rho$ to black, and $g$ to red. Defining the images of the defining morphisms is much more involved.

\subsection{Embedding the $SO(3)_q$ vertex}
We begin by defining the image of the morphism 
\[ \att{T}{.25} \in \Hom_{ \mathcal{C}(\mathbb{Z}_N, \omega, \vec{r})   }(\rho\otimes \rho\to \rho)  \]
under the functor $\mathcal{J}$. 
Note that the image of this morphism lives in the black subcategory of the coloured graph planar algebra. 
This subcategory is simply the standard graph planar algebra on the graph $(\Gamma_N)_{\text{black}}$. 
To define the coordinates of this image, we recall the complex numbers
\[q_N :=\frac{1}{2}\sqrt{-2 +N + \sqrt{N^2 + 4N} + \sqrt{-16 + (N-2 + \sqrt{ N^2 + 4N   })^2               }     } \]
from Section~\ref{sec:skein}, and define the quantum integers
\[  [n]:= \frac{q_N^n - q_N^{-n}}{q_N - q_N^{-1}} .     \]

As the graph $\Gamma_N$ is multiplicity free, we can unambiguously use the shorthand 
\[(i\to k \to j, i \to j)\] 
to represent a pair of all black paths in $\Gamma_N$. 

\begin{defn}\label{def:taudef}
    Let $N \in \mathbb{N}$, and let $c(i,j,k) = 1 + \delta_{i=j} + \delta_{j=k} + \delta_{i = k} - \delta_{i=j=k}$ (i.e the function that counts how many of the inputs agree).
    We define $\tau   \in \Hom_{\operatorname{GPA}(\Gamma_N)}( \text{black}\otimes \text{black} \to \text{black})$ as the functional defined on basis elements by
    \[     \tau(i\to k \to j, i \to j) = \frac{1}{\sqrt{ \lambda_N(k)}}\begin{cases}   
    \sqrt{  \frac{[2]^3}{[3][4]}   } & c(i,j,k) = 1\\
    \sqrt{  \frac{[2]}{[3][4]}   } & c(i,j,k) = 2\\
    -\sqrt{  \frac{[4]}{[2][3]}   } & c(i,j,k) = 3
    \end{cases}   \]
\end{defn}
Note that this definition is symmetric with respect to any permutation of vertices $0,\dots,N-1$.

The above coefficients were obtained in a rather inelegant manner. We first explicitly solved in the case of $N=3$ for elements in $\operatorname{GPA}(\Gamma_3)$ satisfying the defining $SO(3)_{q_3}$ relations. Based on our experience with graph planar algebra embeddings from \cite{dan}, we thought it worthwhile to express the coefficients as products of half-powers of quantum integers in $q_3$. We then generalised these coefficients to the case of $q_N$. The remainder of this subsection will be verifying that this guess is indeed correct.

We recall that the presentation for $SO(3)_{q_N}$ we use is
\[
    \att{loop}{.25} = \delta, \quad 
    \att{T}{.25} = \att{Tdag}{.25}, \quad 
    \att{lolly}{.25}=0, \quad 
    \att{IH1}{.2} -\att{IH2}{.2} = \frac{1}{\delta-1}\left( \att{IH3}{.2} \quad -\quad \att{IH4}{.2} \right)
\]
where $\delta = [3]$.

\begin{thm}
    Let $N\in \mathbb{N}$. Then $\tau \in \Hom_{\operatorname{GPA}(\Gamma_N)}( \text{black}\otimes \text{black} \to \text{black})$ satisfies the defining $SO(3)_{q}$ relations.
\end{thm}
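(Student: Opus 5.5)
We verify each of the four $SO(3)_{q}$ relations by a direct coordinate computation in $\operatorname{GPA}(\Gamma_N)$. Every diagram becomes a sum over internal vertices of products of $\tau$-coefficients and square-root ratios of $\lambda_N$ coming from the cups and caps. Throughout we use the identity $\delta=[3]$ and the relation $z^2+Nz=N$ (equivalently $z=1/(\delta-1)$, since $\delta = N+z$ and $z\delta = N$). These are the only algebraic inputs needed to collapse the resulting sums. The $S_N$ symmetry of $\tau$ on the vertices $0,\dots,N-1$ reduces every check to a handful of orbit types of boundary labellings, indexed by which boundary vertices are $*$ and which coincide.

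\emph{Loop value.} The loop value $\delta$ is automatic: the black graph has Frobenius–Perron eigenvalue $N+z=\delta$, because $\lambda_N$ is an eigenvector with $z+N\cdot 1 = \delta z$ at $*$ and $z+(N-1)+1 = \delta$ at the other vertices. The cap normalisation in the definition of $\operatorname{ev}^{\lambda}$ then gives $\delta$ for the closed loop.

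\emph{Rotational invariance and $\dagger$.} The relation $\att{T}{.25}=\att{Tdag}{.25}$ says that rotating $\tau$ by one click equals $\tau^\dagger$. Rotation multiplies the coefficient $\tau(i\to k\to j,i\to j)$ by $\sqrt{\lambda(\cdot)/\lambda(\cdot)}$ factors and permutes the roles of $i,j,k$. Since $c(i,j,k)$ is symmetric, we must check that the prefactor $1/\sqrt{\lambda_N(k)}$ becomes $1/\sqrt{\lambda_N(\cdot)}$ for the new middle vertex. This is a check that $\tau\cdot\sqrt{\lambda(i)\lambda(j)\lambda(k)}/\lambda(\cdot)$ is fully symmetric. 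All coefficients are real, so the dagger introduces no conjugation.

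\emph{Lollipop.} Capping two legs of $\tau$ gives, at each vertex $i$, a sum $\sum_k \sqrt{\lambda(k)/\lambda(i)}\,\tau(i\to k\to i,i\to i)$. We split into the cases $i=*$ and $i\in\{0,\dots,N-1\}$ and, for each, split the sum over $k$ according to the value of $c$. For $i=*$, the term $k=*$ is absent (there is no black $*\to *$ edge) and the $N$ terms with $k\neq *$ all have $c=2$. For $i\ne *$, we get one $c=3$ term ($k=i$), $N-1$ terms of type $c=2$ with $k\ne *$, and one $c=2$ term with $k=*$. Each case reduces to an identity among $[2],[3],[4]$ and $z$, which we verify using $[2]^2=[3]+1$, $[2][3]=[4]+[2]$ and $[3]=N+z$.

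\emph{The $I=H$ relation.} This is the main obstacle. Both sides live in the 4-boundary-point space, so we must evaluate on every black 4-cycle $(a,b,c,d)$, summing over the internal vertex. The terms involving $\tau^2$ produce sums of quadratic expressions in the three coefficient types, weighted by $\lambda$, and the relation has to hold for every coincidence pattern among $a,b,c,d$ and every placement of $*$. We would enumerate orbit types under $S_N$, roughly fifteen of them, and for each write $(I-H)$ as a polynomial in $N$ times the coefficient squares. Replacing quantum integers with expressions in $\delta$ (using $[2]^2=\delta+1$ and $[4]/[2]=\delta-1$), each identity becomes rational in $\delta$ modulo $\delta^2 = N\delta+N$. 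We expect cancellations that depend on this quadratic relation, and a computer algebra check is a sensible safeguard for the generic case. Cases in which the internal vertex ranges over the $N-3$ or $N-4$ "free" labels are the delicate ones, since they introduce explicit factors of $N$ that must be absorbed using $z=1/(\delta-1)$.
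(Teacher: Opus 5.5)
Your plan matches the paper's: check each relation coordinatewise in $\operatorname{GPA}(\Gamma_N)$, reduce by the $S_N$-symmetry, do the lollipop by hand, and leave $I=H$ to computer algebra. However, the checks you commit to would not go through as written.

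\textbf{The normalisation of $\tau$.} You keep the prefactor $1/\sqrt{\lambda_N(k)}$ from the definition of $\tau$. Then at a vertex $i\neq *$ the $k=*$ summand of the lollipop is $\sqrt{z}\cdot z^{-1/2}\sqrt{[2]/([3][4])}$. Using $[4]=[2]([3]-1)$, $[3]=N+z$ and $0<z<1$, the total is
\[ \frac{N[2]-[4]}{\sqrt{[2][3][4]}}=\frac{[2](1-z)}{\sqrt{[2][3][4]}}\neq 0. \]
The rotation check fails for the same reason. Write $T(i,k,j)$ for the coefficient on $(i\to k\to j,i\to j)$. Bending legs with the paper's $\operatorname{ev}$ and $\operatorname{coev}$ (coefficient $\sqrt{\lambda(\text{middle})/\lambda(\text{base})}$) shows that rotational invariance holds iff $T(i,k,j)/\sqrt{\lambda_N(k)}$ is invariant under cyclically rotating the loop $i\to k\to j\to i$. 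With $1/\sqrt{\lambda_N(k)}$ this fails: compare the loops $(*,0,1)$ and $(1,*,0)$. Your criterion involving ``$\lambda(\cdot)$'' does not capture this.

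Everything works with the prefactor $\sqrt{\lambda_N(k)}$, and that is what the paper's proof actually uses. Its $*$-summand is $\sqrt{z}\,([2][4])^{-1/2}$, and $([2][4])^{-1/2}=\sqrt{z}\sqrt{[2]/([3][4])}$ because $z[2]^2=[3]$. With this prefactor the lollipop reduces to $[2](N-1+z)-[4]=0$.

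Your case $i=*$ is also not a genuine case. The lollipop lives in $\Hom(\emptyset\to\text{black})$, and there is no black loop at $*$, so that case is vacuous. Summing $N$ terms with $c=2$ there, as you propose, produces a nonzero number.

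\textbf{The algebraic inputs.} Several of your identities are wrong, and this would corrupt the $I=H$ reduction.
\begin{itemize}
\item The eigen-equation at $*$ is $N=\delta z$, because there is no black $*\to *$ edge. It is not $z+N=\delta z$; combined with $z+N=\delta$, that equation would force $z=1$.
\item More seriously, $z^2+Nz=N$ is not equivalent to $z=1/(\delta-1)$. From $\delta=N+z$ and $z\delta=N$ one gets $z=\delta/(\delta+1)=[3]/[2]^2$. For example, at $N=1$ one has $z=(\sqrt5-1)/2$ but $1/(\delta-1)=(\sqrt5+1)/2$.
\end{itemize}
You intend to absorb the explicit factors of $N$ in the $I=H$ cases precisely via $z=1/(\delta-1)$, so those reductions would be invalid. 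Once you correct the prefactor to $\sqrt{\lambda_N(k)}$ and use $\delta=N+z$, $z\delta=N$, $z[2]^2=[3]$ and $[4]=[2]([3]-1)$, the case-by-case check of $I=H$ you sketch is exactly what the paper delegates to its Mathematica verification.
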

\begin{proof}
    Let $\tau(i,j,k)$ denote the coefficient of $\tau$ on the basis vector with path $(i \to k \to j,i \to j)$. 
    Define also $c(i,j)$ to be the coefficient of $coev_{\operatorname{black}}$ on the basis vector with path $(i \to j \to i, i)$. 
    Then to verify the (Lollipop) relation we check that for each $l\in \{*\}\cup \{0,\dots, N-1\}$ we have
    \[
    \sum_{k=1}^N \tau(l,l,k) c(l,k) = 0.
    \]
    When $l=*$ this holds vacuously, since $\tau(*,*,k)=0$ for all $k$. 
    By the symmetry of $\tau$ on the non-distinguished vertices, it remains only to check the $l=0$ case. 
    Walking through the definitions gives
    \begin{align*}
       \tau(l,l,*) c(l,*)+ \sum_{k=0}^{N-1} \tau(l,l,k) c(l,k) & = c(0,*) \sqrt{\frac{1}{[2] [4]}}  - c(0,0) \sqrt{\frac{[4]}{[2] [3]}}  
            + (N-1)c(0,1)\sqrt{\frac{[2]}{[3] [4]}}\\
        & =  \sqrt{\frac{\frac{-N +\sqrt{N(N+4)}}{2}}{[2] [4]}}  - \sqrt{\frac{[4]}{[2] [3]}}  + (N-1)\sqrt{\frac{[2]}{[3] [4]}}\\
        & = 0
    \end{align*}
    with the final equality readily verified by a computer.

    The rest of the verifications go through similarly. 
    See the Mathematica file so3Verify.nb attached to the arXiv submission of this note for all of the formal verifications.
\end{proof}

\begin{rmk}
    One notes in the process of verification that once the sums have been resolved, the resulting equations hold for all $N\in(0, \infty)$. 
    This suggests an interpolation of a functor $SO(3)_{q_N}\to GPA(\Gamma_N)$ is hiding somewhere. While the source of this hypothetical interpolation functor is clear and well-defined, the target remains a mystery to us. While the graph $\Gamma_\infty$ is well-defined, it is not locally finite, and so the corresponding graph planar algebra is not well-defined.
\end{rmk}

\subsection{The group embedding}

We now obtain an embedding for the generator
\[\att{ggen}{.4} \in \Hom_{ \mathcal{C}(\mathbb{Z}_N, \omega, \vec{r})   }( g^{\otimes N} \to \mathbf{1}) \]
in the graph planar algebra for $\Gamma_N$. As $g$ is mapped to red, we have that the embedding of the above generator is determined by it's coefficients on the pairs of paths
\[  (* \red{\to} *\red{\to} \cdots \red{\to} *, *) \quad \text{ and }  (i \red{\to}  i+1 \red{\to} \cdots \red{\to} i - 1 \red{\to} i, i  )  \quad \text{ for } 0\leq i \leq N-1.        \]
\begin{defn}
We define $\omega \in \operatorname{Hom}_{\operatorname{GPA}(\Gamma_N)}(  \textrm{red}^{\otimes N} \to \mathbf{1}  )$ as the functional defined on basis elements by 
\begin{align*}
\omega(* \red{\to} *\red{\to} \cdots \red{\to} *, *) &=1\\
\omega(i \red{\to}  i+1 \red{\to} \cdots \red{\to} i - 1 \red{\to} i, i) &=1 \quad \text{ for all } 1\leq i \leq N-1.
\end{align*}
\end{defn}
We now verify that this embedding satisfies the defining $\mathbb{Z}_N$ relations of $\mathcal{C}(\mathbb{Z}_N, \omega, \vec{r})$. The first two of these relations are immediately satisfied by the definition of the rigidity maps in $\operatorname{GPA}(\Gamma_N)$. The third relation and fourth relations reduce down to $1\times 1= 1$ and $\overline{1} = 1$.
and is also satisfied.

\subsection{Embedding the mixed generator}
To complete our embedding, we must find an element of $\GPA(\Gamma_N)$ for the image of the isomorphism
\[
    \att{isoRB}{.3}
\]
The image of this map in $\GPA(\Gamma_N)$ will live in $\Hom_{\GPA(\Gamma_N)}(\text{black}\otimes \text{red} \to \text{black}   )$, and hence will take a value on each pair of paths
\[  (i\to j \red{\to} j+1, i \to j+1)          \]
for $i,j \in \Gamma_N$ such that the above path exists.
\begin{defn}
Let $\Theta(i,j) \in \mathbb{C}$ for $i,j \in \Gamma_N$ such that $i\to j$. We define $\gamma_\Theta \in \operatorname{Hom}_{\operatorname{GPA}(\Gamma_N)}(  \textrm{black}\otimes \textrm{red} \to \textrm{black}  )$ as the functional defined on basis elements by 
\[\lambda_\Theta(i\to j \red{\to} j+1, i \to j+1):= \Theta(i,j).\]
\end{defn}
Note that from the definition of the graph $\Gamma_N$, the coefficient $\Theta(i,j)$ 
exists for all $i,j$ apart from $i = * = j$. 

We will now make some assumptions on our solution space. 
These assumptions will make solving for the scalars $\Theta(i,j)$ significantly easier, and will be justified by the fact that we can find solutions satisfying them. 
In practice these assumptions were obtained by solving small examples (i.e $N=2$ and $N=3$) and analysing the solution for patterns. 
The first is a symmetry assumption.

\begin{ass}\label{ass:sym}
We assume that the scalars $\Theta(i,j)$ satisfy the projective symmetry
\[   \Theta(i+1,j+1) = \omega  \cdot \Theta(i,j)          \]
for all $i,j \in \Gamma_N$.
\end{ass}

The second assumption concerns scalars which involve $i=*$ or $j=*$.

\begin{ass}\label{ass:star}
We assume that 
\[  \Theta(*,j) = \Theta(j,*) = \omega^{j}  \]
for all $0 \leq j <N$.
\end{ass}

Note that with these assumptions in play, the GPA embedding of the morphism  
\[\att{isoRB}{.3}\]
is entirely determined by the coefficients $\Theta(0, v)$ for $0\leq v\leq N-1$. It will be convenient to give these coefficients names.

\begin{defn}\label{def:fec}
Let $0\leq v\leq N-1$. We define
\[    P_v :=    \Theta(0,v)            \]
We will refer to these scalars as the free embedding parameters. For ease of notation for $0\leq v\leq N-1$ and $0\leq i \leq N-1$ we also define
\[Q_{v;i}:= \prod_{l=1}^i P_{v-l}.\]
\end{defn}
We also point out that due to Assumption~\ref{ass:sym}, we have the useful equation
\[   \Theta(v,w) = \omega^v  P_{w-v}   \]

\subsubsection{The normalisation relation}
Our first insight about $\Theta$ comes from the normalization choice on the red bigon. That is the relation
\[\att{isoRel1}{.3} =  \att{isoRel2}{.3}\]
For $\gamma_\Theta$ to satisfy this relation, we obtain the equations
\[1 = \Theta(i,j)\ol{\Theta(i,j)} = |\Theta(i,j)|^2 \quad \text{ for all $i,j \in \Gamma_N$}.\]
Therefore the values $\Theta(i,j)$ are unimodular.

\subsubsection{The absorbtion relation}

We consider the relation:

\[\att{gMix1}{.3} = (-1)^{N+1} \att{gMix2}{.3}\]

For $\gamma_\Theta$ to satisfy this relation, we have the following equations in terms of the free embedding parameters:
\[ \omega^{\frac{(N-1)N}{2}} = (-1)^{N+1}     \]
and
\begin{equation}\label{eq:absorb}   \prod_{i=0}^{N-1}P_i =   (-1)^{N+1}.             \end{equation}
The first of these equations is automatically satisfied as $\omega$ is a primitive $N$-th root of unity. Indeed if $N$ is odd, then 
\[     \omega^{\frac{(N-1)N}{2}} = (\omega^N)^{\frac{N-1}{2}}  = 1. \]
In a similar fashion if $N$ is even, then
\[  \omega^{\frac{(N-1)N}{2}} = (\omega^{\frac{N}{2}})^{N-1} = (-1)^{N-1}= -1.      \]
The Equation~\eqref{eq:absorb} is not automatic, and will have to solved for and verified. 
\subsubsection{The swap relation}

 Now recall the relation:
\begin{equation}
    \att{swap2}{.3} = \omega \att{swap1}{.3}
\end{equation}

We evaluate the functional $\gamma_{\Theta}$ on the basis vector
\[         (0 \to v \red{\to} v+1, 0 \red{\to} 1 \to v+1)    \]
on both sides of this relation. This results in the equations
\[  \Theta(1,v)\Theta(v,0)^{-1} = \omega \Theta(0,v) \Theta(v+1,0)^{-1} \qquad 0\leq v \leq N-1. \]
Using Assumptions~\ref{ass:sym} and Definition~\ref{def:fec} this gives the equations
\begin{equation}\label{eq:quad}   
    P_v P_{-v} = \omega P_{v-1}P_{-v-1} \qquad 0\leq v \leq N-1..                     
    \end{equation}
It turns out this quadratic system can be simplified to a linear system.
\begin{lem}\label{lem:linQuad}
In the case of $N$ odd, we have
\[   P_i = \omega^{i - \frac{N-1}{2}}P_{-i-1}       \qquad \text{for all }  0\leq i < N   \]
In the case of $N$ even, let $\tau$ be a choice of one of the two solutions to $\tau^2 = \omega$. Then we have
\[  P_i = \tau^{2i+ 1}P_{-i-1}  \qquad \text{for all }  0\leq i < N.  \]
\end{lem}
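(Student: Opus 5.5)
The plan is to turn the quadratic system \eqref{eq:quad} into a first-order recurrence for a single ratio. By the normalisation relation every $\Theta(i,j)$, and in particular every $P_v$, is unimodular and hence invertible. So for $0 \le v < N$ (indices mod $N$) we may define
\[ f(v) := \frac{P_v}{P_{-v-1}}. \]
Dividing both sides of \eqref{eq:quad} by $P_{-v}P_{-v-1}$ gives
\[ \frac{P_v}{P_{-v-1}} = \omega \frac{P_{v-1}}{P_{-v}}. \]
Since $-(v-1)-1 = -v$, the right-hand side is $\omega f(v-1)$. Thus \eqref{eq:quad} is equivalent to the recurrence $f(v) = \omega f(v-1)$ for all $v$. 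It follows that $f(i) = \omega^i f(0)$ for all $i$, and this is consistent modulo $N$ because $\omega^N = 1$.

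It remains to pin down $f(0)$. By definition we have the involutive identity $f(-i-1) = f(i)^{-1}$. Substituting the closed form gives
\[ \omega^{-i-1}f(0) = \omega^{-i}f(0)^{-1}, \qquad\text{so}\qquad f(0)^2 = \omega. \]
The remaining work is to decide which square root of $\omega$ occurs, and here we split by parity.

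For $N$ odd, the involution $i \mapsto -i-1$ on $\Z_N$ has the fixed point $i_0 = \frac{N-1}{2}$, since $2i_0 = N-1 \equiv -1$. At that point $f(i_0) = P_{i_0}/P_{i_0} = 1$. Hence $\omega^{\frac{N-1}{2}} f(0) = 1$, which forces $f(0) = \omega^{-\frac{N-1}{2}}$. This is indeed a square root of $\omega$, as $\omega^{-(N-1)} = \omega$. Therefore $f(i) = \omega^{i - \frac{N-1}{2}}$, which is the claimed identity $P_i = \omega^{i-\frac{N-1}{2}}P_{-i-1}$.

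For $N$ even there is no fixed point, so $f(0)$ is only determined up to sign. We set $\tau := f(0)$, which is one of the two solutions of $\tau^2 = \omega$. Then $f(i) = \tau^{2i}\tau = \tau^{2i+1}$, giving the claim.

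The only delicate points are bookkeeping ones. First, the recurrence must be read with indices mod $N$, so that the step from $v=0$ to $v=N-1$ closes up; this works because $\omega^N = 1$, and in the even case because $\tau^{2N} = 1$. Second, in the odd case we must notice the fixed point of $i \mapsto -i-1$, which removes the sign ambiguity without needing the absorption relation \eqref{eq:absorb}.
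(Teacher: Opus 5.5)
Your proof is correct and is essentially the paper's argument, repackaged through the ratio $f(v)=P_v/P_{-v-1}$. The paper also inducts on the relation from \eqref{eq:quad} that amounts to $f(v)=\omega f(v-1)$. It anchors the odd case at $v=\frac{N+1}{2}$, where the fixed point $P_{\frac{N-1}{2}}$ cancels (your $f(i_0)=1$), and the even case at $v=0$, where $P_0^2=\omega P_{-1}^2$ fixes $\tau$ as the square root determined by the solution, exactly as your choice $\tau:=f(0)$; your version additionally makes the wrap-around consistency modulo $N$ explicit, which the paper leaves implicit.
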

\begin{proof}
In the case of $N$ odd we begin by taking Equation~\eqref{eq:quad} in the case of $v = \frac{N+1}{2}$. This gives after simplification
\[ P_{\frac{N+1}{2}} P_{\frac{N-1}{2}} = \omega P_{\frac{N-1}{2}} P_{\frac{N-3}{2}}. \]
Hence $P_{\frac{N+1}{2}}= \omega P_{\frac{N-3}{2}}$, which is the desired equation in the case of $i = \frac{N+1}{2}$. We now proceed by induction to show the desired equation holds for $\frac{N+1}{2} < i < \frac{3N-1}{2}$ (note that $\frac{3N-1}{2} \equiv \frac{N-1}{2} \pmod N$).  For such an $i$, assume $ P_{i-1} = \omega^{i-1 - \frac{N-1}{2}}P_{-i}$. Consider Equation~\eqref{eq:quad} in the case of $v = i$. This gives after simplification
\[    P_{i} P_{-i} = \omega P_{i-1}P_{-i - 1}.            \]
Applying the assumption then gives
\[   P_{i} P_{-i} =  \omega^{i - \frac{N-1}{2}}P_{-i} P_{-i - 1}  .      \]
Hence $P_{i}  \omega^{i - \frac{N-1}{2}}P_{-i - 1}$ as desired. Thus we have shown \[P_i = \omega^{i - \frac{N-1}{2}}P_{-i-1}       \qquad \text{for all }\frac{N+1}{2}\leq i < \frac{3N-1}{2}, \]
which modulo $N$ is equivalent to the formula in the statement of the lemma.

The case of $N$ even follows in a similar fashion to the odd case. We begin with $v=0$ in Equation~\eqref{eq:quad} to obtain
\[  P_0^2 = \omega P_{-1}^2.   \]
Solving this gives $ P_0 = \tau P_{-1}$. We again proceed by induction, assuming that $P_{i-1} = \tau^{2i- 1}P_{-i}$. We then take Equation~\eqref{eq:quad} with $v=i$ to get 
\[    P_{i}P_{-i} = \omega   P_{i-1}P_{-i-1}   .  \]
Hence
\[   P_{i}P_{-i}   = \tau^2   \tau^{2i- 1}P_{-i}P_{-i-1}      \]
and so $P_{i} =   \tau^{2i+1}P_{-i-1}$ as desired.
\end{proof}

Now that Equation~\eqref{eq:quad} is solved, we have by Assumption~\ref{ass:sym} that $\gamma_{\Theta}$ satisfies the required equations on basis vectors containing paths which do not travel through the $*$ vertex. These remaining basis vectors are of the form
\[  (v \to * \red{\to} *, v \red{\to} v+1 \to *   ) \quad \text{and}\quad (*\to v \red{\to} v+1, * \red{\to} * \to v+1)   \]
for $0 \leq v <N$. Evaluating $\gamma_{\Theta}$ on these basis vectors yields the equations 
\[   \omega^{v+1} \omega^{ -v} = \omega \omega^v \omega^{-v} \quad \text{and}  \quad \omega^v \omega^{-v} = \omega \omega^v \omega^{-v-1}.       \]
Both of the equations hold, thus $\gamma_\Theta$ satisfies the swap relation precisely when the equations of Lemma~\ref{lem:linQuad} are satisfied.

\subsection{The Tadpole Relations}
We now bring our attention to the two families of tadpole relations. The easier of these from a GPA point of view are the relations

\[ \att{schur0}{.3}\quad = \quad 0\]
for $1\leq i \leq N-1$. For $\gamma_{\Theta}$ to satisfy these relations we note that the only admissible basis vector is
\[  (* \red{\to} *\red{\to} \cdots \red{\to} *, *)   .                \]
The coefficient of the embedding of the left hand side of the above relations on this basis vector is 
\[        \frac{1}{\sqrt{\lambda_*}} \sum_{v=0}^{N-1}  \Theta(v,*)^i = \frac{1}{\sqrt{\lambda_*}} \sum_{v=0}^{N-1}  (\omega^i)^v     \]
where we have used Assumption~\ref{ass:star}. As $\omega$ is a primitive $N$-th root of unity, we have that $\omega^i \neq 1$ for all $1\leq i \leq N-1$. It follows that $\sum_{v=0}^{N-1}  (\omega^i)^v = 0$, and hence $\gamma_{\Theta}$ already satsifies the above relations for all $i$.

The more complicated relations are the family
\[\att{schur1}{.3} \quad = \quad 0\]
for $1\leq i \leq N-1$. 
From these relations we will obtain equations in the free embedding variables $P_v$ (which we will later use to find explicit solutions). The admissible basis vectors for the left hand side are
\[   (v \red{\to} v+1\red{\to} \cdots \red{\to} (v+i), v\to (v+i)   )        \]
for $0\leq v \leq N-1$. By Assumption~\ref{ass:sym} it suffices to verify this relation only for $v=0$. This results in the equations
\[        \tau(0, i, *) \sqrt{\lambda_*} \prod_{l=0}^{i-1} \Theta(*,l) + \sum_{j=0}^{N-1} \tau(0,i,j)\prod_{l=0}^{i-1} \Theta(j,l)         \]
for $1\leq i \leq N-1$. After plugging in known values and simplifying, we obtain
\begin{equation*}\label{eq:tad}  \sqrt{N} \omega^{(i-1)i/2}   +    Q_{i;i}+   \omega^{i^2} Q_{0;i}    + [2]\sum_{j=1, j\neq i}^{N-1}\omega^{ij} Q_{i-j;i} = 0 \quad 1\leq i \leq N-1.   \end{equation*}
Hence $\gamma_\Theta$ satisfies the above tadpole relations precisely when the free embedding parameters satisfy Equation~\eqref{eq:tad}. Note that these are equations are degree $i$ polynomials in the embeddings variables $P_v$. In particular for $i=1$ we get a linear equation.
\subsection{The Change of Basis Relation}

We now move onto the most complicated relation
\begin{equation}\label{eq:COB}        \att{T1S}{.3} = \sum_{i = 0}^{N-1}   \vec{r}_i \att{Ti}{.3}.    \end{equation}
Currently we are in the situation where the coefficients $\vec{r}$ are unknown. However in the graph planar algebra, we are able to find a linear subsystem (in the free embedding variables) of equations required for the embedding to satisfy \eqref{eq:COB}. This allows us to solve for the values $\vec{r}$ in terms of the free embedding variables.

To obtain equations involving the values $\vec{r}_i$, we evaluate the coefficient of the embedding of the left and right hand of Equation~\eqref{eq:COB} on the basis vector
\[  (0\to v \to *, 0\to *)           \]
for $0\leq v \leq N-1$. This results in the equations
\[  \tau(0,*, v-1)\omega^{v-1} P_{v-1} = \sum_i \vec{r}_i  \tau(0,*,v)\omega^{v i}  .       \]
Using the values of $\tau(i,j,k)$ from Definition~\ref{def:taudef} this yields the linear system:
\[
\begin{bmatrix}
        \frac{\omega^{N-1}[2]}{P_{N-1}} \\
        \frac{1}{[2]P_0}\\ \frac{\omega}{P_1} \\
        \vdots\\
        \frac{\omega^{N-2}}{{P}_{N-2}}
    \end{bmatrix} = \begin{bmatrix}  \omega^{-i j }     \end{bmatrix}_{0\leq i,j \leq N-1} \cdot  \vec{r}
\]
Inverting this linear system yields the following solution for $\vec{r}$ in terms of the free embedding parameters $P_i$:
\[  
    \vec{r} = \frac{1}{N}
    \begin{bmatrix} 
    1 & 1 & 1&\cdots & 1 \\
    1& \omega &\omega^2 &  \cdots & \omega^{N-1} \\
    1 & \omega^2 & \omega^4 & \cdots &  \omega^{N-2}  \\
    \vdots & \vdots & \vdots &\ddots & \vdots\\
    1 & \omega^{N-1} &\omega^{N-2} & \cdots & \omega
    \end{bmatrix} 
    \begin{bmatrix}
        \frac{\omega^{N-1}[2]}{P_{N-1}} \\
        \frac{1}{[2]P_0}\\ \frac{\omega}{P_1} \\
        \vdots\\
        \frac{\omega^{N-2}}{{P}_{N-2}}
    \end{bmatrix}.  
\]

Note that this solution for $\vec{r}$ was obtained by solving a small subsystem of the full system of equations required for the relation in Equation~\ref{eq:COB} to hold. Hence we must verify more equations. By the projective symmetry of Assumption~\ref{ass:sym}, it suffices to verify this relation on the basis vectors
\begin{align*}
(* \to 0 \to v, *\to v)   & \qquad 0\leq v \leq N-1\\
(0 \to * \to v, 0\to v)   & \qquad 0\leq v \leq N-1\\
(0 \to v \to w, 0\to w)   & \qquad 0\leq v,w \leq N-1.
\end{align*}

The first of these gives the equations:
\[ \tau(*, v, N-1)\omega^{v+1}Q_{-v;1} = \sum_{i=0}^{N-1} \tau(*,v-i, 0)\vec{r}_i \omega^{iv - i(i+1)/2} Q_{v;i}^{-1}   \]
The second gives:
\[  \tau(0,v,*)\omega^v = \sum_{i=0}^{N-1}  \tau(0,v-i,*) \vec{r}_i \omega^{-iv + i(i+1)/2} Q_{v;i}       \]
The final one gives
\[         \tau(0,w,v-1)\omega^w Q_{v-w;1}Q_{v;1}^{-1} = \sum_{i=0}^{N-1} \vec{r}_i \tau(0,w-i, v) \omega^{-iv} Q_{w;i}Q_{w-v;i}^{-1}    \]





In summary we now have a set of equations in the free embedding variables, a solution of which gives an embedding of $\mathcal{C}(\mathbb{Z}_N, \omega, \vec{r})$ into $\operatorname{GPA}(\Gamma_N)$. This is the main result of this section.

\begin{thm}\label{thm:embThm}
Let $N \in \mathbb{N}$, and $\omega$ a primitive $N$-th root of unity. Further, let $\{  P_i : 0\leq i \leq N-1\}$ be complex unimodular scalars satisfying the following equations:
\begin{align*}
P_{i}&=\begin{cases}
\omega^{i - \frac{N-1}{2}}  P_{-i-1}   &\text{ if $N$ odd} \\
\tau^{2i +1}  P_{-i-1}   &\text{ if $N$ even, where $\tau^2 = \omega$} \\
\end{cases}   && 0\leq i \leq N-1 \\
\prod_{i= 0}^{N-1} P_i &= (-1)^{N+1}\\
  0&=\sqrt{N} \omega^{(i-1)i/2}   +    Q_{i;i}+   \omega^{i^2} Q_{0;i}    + [2]\sum_{j=1, j\neq i}^{N-1}\omega^{ij} Q_{i-j;i}  && 0\leq i \leq N-1\\
 \tau(*, v, N-1)\omega^{v+1}Q_{-v;1} &= \sum_{i=0}^{N-1} \tau(*,v-i, 0)\vec{r}_i \omega^{iv - i(i+1)/2} Q_{v;i}^{-1}&& 0\leq v \leq N-1\\
  \tau(0,v,*)\omega^v &= \sum_{i=0}^{N-1}  \tau(0,v-i,*) \vec{r}_i \omega^{-iv + i(i+1)/2} Q_{v;i} && 0\leq v \leq N-1\\
       \tau(0,w,v-1)\omega^w Q_{v-w;1}Q_{v;1}^{-1} &= \sum_{i=0}^{N-1} \vec{r}_i \tau(0,w-i, v) \omega^{-iv} Q_{w;i}Q_{w-v;i}^{-1} && 0\leq v,w \leq N-1
\end{align*}
where we recall $Q_{v;i}:= \prod_{l=1}^i P_{v-l}$, the function $\tau$ as in Definition~\ref{def:taudef}, and
\[  
    \vec{r} = \frac{1}{N}
    \begin{bmatrix} 
    1 & 1 & 1&\cdots & 1 \\
    1& \omega &\omega^2 &  \cdots & \omega^{N-1} \\
    1 & \omega^2 & \omega^4 & \cdots &  \omega^{N-2}  \\
    \vdots & \vdots & \vdots &\ddots & \vdots\\
    1 & \omega^{N-1} &\omega^{N-2} & \cdots & \omega
    \end{bmatrix} 
    \begin{bmatrix}
        \frac{\omega^{N-1}[2]}{P_{N-1}} \\
        \frac{1}{[2]P_0}\\ \frac{\omega}{P_1} \\
        \vdots\\
        \frac{\omega^{N-2}}{{P}_{N-2}}
    \end{bmatrix}.  
\]

Then there exists a non-trivial $\dag$-functor $\mathcal{C}(N, \omega, \vec{r}) \to GPA(\Gamma_N)$.
\end{thm}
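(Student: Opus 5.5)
The plan is to use the universal property of the generators-and-relations category $\mathcal{C}(\mathbb{Z}_N,\omega,\vec{r})$. Since $\mathcal{C}(\mathbb{Z}_N,\omega,\vec{r})$ is defined as the pivotal $\dag$-category freely generated by $\rho$, $g$ and the three generating morphisms modulo the listed relations, a $\dag$-functor into $\GPA(\Gamma_N)$ is the same as a choice of images of the generators that satisfy every defining relation. We send $\rho\mapsto$ black and $g\mapsto$ red. The trivalent vertex goes to $\tau$ of Definition~\ref{def:taudef}, the generator $g^{\otimes N}\to\mathbf{1}$ goes to the functional $\omega$ of the group embedding subsection, and the mixed isomorphism goes to $\gamma_\Theta$. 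Here $\Theta$ is built from the given $P_v$ via Assumptions~\ref{ass:sym} and~\ref{ass:star}, that is, $\Theta(v,w)=\omega^vP_{w-v}$ and $\Theta(*,j)=\Theta(j,*)=\omega^j$. Symmetric self-duality of $\rho$ is handled by the canonical isomorphism black $\to$ black$^*$ described at the end of Subsection~\ref{subsec:gpa}. Pivotality and the $\dag$-structure come from the rigidity maps $\operatorname{ev}^{\lambda}$ and $\operatorname{coev}^{\lambda}$.

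Next we check the relations one family at a time, quoting the computations of this section.
\begin{enumerate}[(i)]
\item The $SO(3)_{q_N}$ relations hold by the theorem on $\tau$.
\item The $\operatorname{Vec}(\mathbb{Z}_N)$ relations hold by the group embedding subsection.
\item The unitarity relation $\gamma_\Theta\gamma_\Theta^\dag=\mathrm{id}$ holds because the $P_v$ are unimodular and $|\omega|=1$, so every $\Theta(i,j)$ is unimodular.
\item The absorption relation holds: its $\omega$-part is automatic, and its other part is Equation~\eqref{eq:absorb}, which is a hypothesis.
\item The swap relation reduces to Equation~\eqref{eq:quad} together with the identities checked at $*$. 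Equation~\eqref{eq:quad} in turn follows from the linear relations of Lemma~\ref{lem:linQuad}, which are hypothesised. Substituting $P_i=\omega^{i-\frac{N-1}{2}}P_{-i-1}$, or the $\tau$-version for even $N$, into both sides of \eqref{eq:quad} should give equality directly, so only this converse direction of the lemma is needed.
\item The first tadpole family holds automatically by Assumption~\ref{ass:star} and the fact that $\omega$ is primitive.
\item The second tadpole family is exactly the displayed degree-$i$ equations.
\item For the change of basis relation \eqref{eq:COB}, $\vec{r}$ is defined by the inverse DFT formula, so the subsystem on the basis vectors $(0\to v\to *,0\to *)$ holds by construction. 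The three remaining families of basis vectors are the three remaining hypothesised equations.
\end{enumerate}

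The main obstacle is justifying that these finitely many coefficient checks cover all basis vectors. Concretely, we must show that the projective symmetry of Assumption~\ref{ass:sym} transforms each relation equivariantly. The symmetry is the vertex shift $i\mapsto i+1$ fixing $*$. It preserves $\tau$ and $\lambda_N$, and it multiplies $\Theta$ by $\omega$. Both sides of each relation contain the same number of $\gamma_\Theta$ vertices (or the same net number, counting daggers as $\omega^{-1}$), so the coefficient on a shifted pair of paths is a fixed power of $\omega$ times the unshifted one. This is the same power on both sides. One subtlety needs care: the shift uses $\Theta(*+1,j+1)=\Theta(*,j+1)=\omega^{j+1}=\omega\Theta(*,j)$, so Assumption~\ref{ass:star} is consistent with Assumption~\ref{ass:sym}, given the convention $*+1=*$. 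I would write this equivariance out once as a lemma about the shift automorphism of $\GPA(\Gamma_N)$ twisted by $\omega$. After that, each relation only needs checking on orbit representatives, which are the basis vectors enumerated in the text.

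Finally, non-triviality is immediate. Under the functor, the identity on $\rho$ has trace $\sum\lambda_N(v)\neq 0$, so the functor is nonzero. By Proposition~\ref{prop:descent} it then descends faithfully to the semisimplification.
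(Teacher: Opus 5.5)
Your strategy is the paper's own: map the generators to $\tau$, the all-ones functional, and $\gamma_\Theta$, then check each relation on orbit representatives for the shift. Your equivariance lemma is a good way to make the paper's tacit ``by Assumption~\ref{ass:sym} it suffices'' precise. One correction there: in the absorption relation the two sides carry $N$ and $0$ copies of $\gamma_\Theta$, so the powers of $\omega$ agree only because $\omega^N=1$. Cleanly, $\#\gamma_\Theta-\#\gamma_\Theta^\dagger$ in a diagram $X\to Y$ is determined mod $N$ by the $g$-charges of $X$ and $Y$, so every relation is automatically homogeneous.

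\textbf{The gap} is in your step (iv), together with your claim that the orbit representatives are exactly those treated in the text. The absorption relation lives in $\Hom(\rho\otimes g^{\otimes N}\to\rho)$, with basis vectors $(i\to j\red{\to}j+1\red{\to}\cdots\red{\to}j,\ i\to j)$. There are three orbit types:
\begin{itemize}
\item $i,j\neq *$, which gives $\prod_m P_m=(-1)^{N+1}$;
\item $i=*$, which gives $\omega^{N(N-1)/2}=(-1)^{N+1}$;
\item $j=*$ with $i\in\{0,\dots,N-1\}$, which the text (and so your proof) never checks.
\end{itemize}
On the third type the left side is $\Theta(i,*)^N=\omega^{Ni}=1$, using Assumption~\ref{ass:star}. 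The right side is $(-1)^{N+1}$ times the coefficient of the image of $g^{\otimes N}\to\mathbf{1}$ on $(*\red{\to}*\red{\to}\cdots\red{\to}*,\ *)$, and that coefficient is $1$. For even $N$ this reads $1=-1$, independently of the $P_v$. So for even $N$ your assignment does not define a functor. The paper's own verification of this relation has the same omission.

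\textbf{The repair is cheap.} Send $g^{\otimes N}\to\mathbf{1}$ to the functional that is $1$ on the $N$-cycles through $0,\dots,N-1$ and $(-1)^{N+1}$ on the loop at $*$.
\begin{itemize}
\item This functional is obtained from the old one by rescaling the red loop edge at $*$ by an $N$-th root of $(-1)^{N+1}$, and its reverse by the conjugate. That rescaling is a pivotal $\dagger$-automorphism of $\GPA(\Gamma_N)$, so the $\operatorname{Vec}(\mathbb{Z}_N)$ relations, which involve only red strands, still hold.
\item No other relation involves this generator.
\item The missing absorption family now reads $1=(-1)^{N+1}\cdot(-1)^{N+1}$.
\end{itemize}
With this change your argument goes through for all $N$; for odd $N$, which covers every example in the paper, it is correct as written.

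A small slip at the end: the trace of $\mathrm{id}_\rho$ in $\GPA(\Gamma_N)$ is $\delta\cdot\mathrm{id}_\emptyset$, since $\lambda_N$ is a $\delta$-eigenvector, not $\sum_v\lambda_N(v)$. Non-triviality is immediate anyway from $\mathcal{J}(\mathrm{id}_{\mathbf{1}})=\mathrm{id}_\emptyset\neq 0$.
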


As a consequence, we have the following existence result for near-group fusion categories.

\begin{cor}\label{cor:main}
Let $N \in \mathbb{N}$, $\omega$ a primitive $N$-th root of unity, and let $\{  P_i : 0\leq i \leq N-1\}$ be complex unimodular scalars satisfying the equations of Theorem~\ref{thm:embThm}. Then $\operatorname{Ab}( \overline{\mathcal{C}(N, \omega, \vec{r})})$ is a unitary fusion category with fusion ring $R(\mathbb{Z}_N,N)$, where $\vec{r}$ is an in Theorem~\ref{thm:embThm}.
\end{cor}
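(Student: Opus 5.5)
The plan is to combine Theorem~\ref{thm:embThm} with Proposition~\ref{prop:descent} and Theorem~\ref{thm:karCom}. By Theorem~\ref{thm:embThm}, unimodular scalars $P_i$ satisfying the listed equations give a non-trivial $\dag$-functor $\mathcal{J}: \mathcal{C}(N,\omega,\vec{r}) \to \operatorname{GPA}(\Gamma_N)$, where $\vec{r}$ is the vector determined by the $P_i$. The first thing to check is that $\mathcal{J}$ is pivotal.

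\begin{itemize}
\item The image of $\rho$ is the black colour. Black is undirected, so it is symmetrically self-dual via the isomorphism $\sum_e (e,\overline{e})$ recorded at the end of Subsection~\ref{subsec:gpa}.
\item The duality maps of $\mathcal{C}(N,\omega,\vec{r})$ are sent to the rigidity maps $\operatorname{ev}^{\lambda_i}_i$ and $\operatorname{coev}^{\lambda_i}_i$. These are built from the Frobenius--Perron eigenvectors $\lambda_N$ and $1$.
\end{itemize}

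So $\mathcal{J}$ respects the pivotal structure. Since $\operatorname{GPA}(\Gamma_N)$ is a unitary tensor category, Proposition~\ref{prop:descent} gives a faithful pivotal functor $\overline{\mathcal{J}}: \overline{\mathcal{C}(N,\omega,\vec{r})} \to \operatorname{GPA}(\Gamma_N)$, and shows that $\overline{\mathcal{C}(N,\omega,\vec{r})}$ is unitary.

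Next I would verify non-triviality of the quotient. The loop relation gives $\operatorname{tr}(\operatorname{id}_\rho) = \delta \neq 0$, and $\mathcal{J}$ is non-trivial. Hence $\operatorname{id}_\rho$ is not negligible, and $\overline{\mathcal{C}(N,\omega,\vec{r})}$ is non-zero. Concretely, its image in the GPA has trace $\sum_v \lambda_N(v)^2$ up to normalisation, which is positive. I also need $\dim\operatorname{End}(\mathbf{1}) = 1$ so that traces are scalars. This is the same input used earlier, namely \cite[Corollary 1]{Caleb}, or it follows from the evaluation algorithm of the skein theory.

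With non-triviality and unitarity in hand, Theorem~\ref{thm:karCom} applies with this $\omega$ and $\vec{r}$, giving $K_0(\operatorname{Ab}(\overline{\mathcal{C}(N,\omega,\vec{r})})) \cong R(\mathbb{Z}_N,N)$. The Cauchy completion of a unitary category is again unitary, and it is semisimple. It has finitely many simples, namely the $p_{g^i}$ and $\rho$, and finite-dimensional hom spaces because it embeds faithfully into the GPA. It is rigid and $\operatorname{End}(\mathbf{1})$ is one-dimensional, so it is a unitary fusion category.

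I expect the main obstacle to be the first step: making sure the functor produced by Theorem~\ref{thm:embThm} is genuinely pivotal and non-trivial on the unit, so that Proposition~\ref{prop:descent} applies. If needed, I would check directly that $\mathcal{J}(\operatorname{id}_\mathbf{1})$ is the identity of the empty string and that the cups and caps match the chosen $\operatorname{ev}$/$\operatorname{coev}$. Everything after that is a formal combination of earlier results.
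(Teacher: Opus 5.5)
Your proposal is correct and follows the paper's proof. Theorem~\ref{thm:embThm} supplies the non-trivial $\dag$-functor into $\operatorname{GPA}(\Gamma_N)$, Proposition~\ref{prop:descent} makes $\overline{\mathcal{C}(N,\omega,\vec{r})}$ non-trivial and unitary, and Theorem~\ref{thm:karCom} identifies the fusion ring; your extra checks (pivotality of $\mathcal{J}$, non-negligibility of $\operatorname{id}_\rho$ via $\delta\neq 0$, and that the Cauchy completion is genuinely fusion) are points the paper leaves implicit, and they are sound.
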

\begin{proof}
By Theorem~\ref{thm:embThm} we have that there is a non-trivial $\dag$-functor $\mathcal{C}(N, \omega, \vec{r}) \to GPA(\Gamma_N)$. It follows from \cite[Proposition 2.39]{EM-S} that $\overline{\mathcal{C}(N, \omega, \vec{r})}$ is non-trivial and unitary. We can now apply Theorem~\ref{thm:karCom} to obtain 
\[ K_0(\Ab(\overline{\CC({\Z_N},\omega, \vec{r})})   \cong R(\Z_N, N). \]
\end{proof}
\section{Examples}

In this section we find solutions to the equations of Theorem~\ref{thm:embThm} for several examples of odd $N$. This in turn constructs the corresponding near-group categories. This recovers existence results of Evans-Gannon \cite{E-G}, but with an alternate construction method. We will list our solutions to the free embedding variables $P_i$ for $0 \leq i \leq \frac{N+1}{2}$. The remaining variables are then immediate from the linear equation $ P_i = \omega^{i - \frac{N-1}{2}}P_{-1-i}$.

We define $\zeta_n := e^{2\pi i \frac{1}{n}}$. We present our solutions as algebraic extensions of the cyclotomic fields $\mathbb{Q}( \zeta_N, \zeta_{N+4})$. The quantity $[2]
 = \sqrt{N} + \sqrt{N+4}$ will play a key role in our solutions. When $N \equiv 1 \pmod 4$ we have that $[2]\in\mathbb{Q}(  \zeta_N, \zeta_{N+4})$. However when $N \equiv 3 \pmod 4$ we have $[2]\not\in\mathbb{Q}( \zeta_N, \zeta_{N+4})$. In this case we must extend to the larger cyclotomic field $\mathbb{Q}( \zeta_N, \zeta_{N+4}, i)$ to have $[2]$ in the field. For ease of notation we recall $\delta =\frac{N + \sqrt{N(N+4)}}{2}$, which is always in the field $\mathbb{Q}( \zeta_N, \zeta_{N+4})$.

We do not include the full verification of our solutions in this note. As our solutions are in explicit extensions of cyclotomic fields, the required equations can be computer verified quickly by reducing them modulo our defining (degree 2 and 4) polynomials. We include the computer verifications in Mathematica files attached to the arXiv submission of this note.  

We note that in order to verify that our free embedding variables are unimodular, we need to have an algebraic expression for the complex conjugate of our field generator $\alpha$. All of our defining polynomials have real coefficients, and so the complex conjugate of $\alpha$ is also an algebraic conjugate. In the degree two case this immediately gives the required algebraic expression for the conjugate. In the degree four case, the defining polynomial has constant term 1, and the results of \cite{numb-theory} guarantee the existence of at least one unimodular root of this polynomial. The real coefficients of the polynomial give that the complex conjugate of this unimodular root is also a root, and then the constant 1 term gives that the final two roots are also unimodular. It then follows that the complex conjugate of $\alpha$ in this case is its algebraic inverse.
\subsection{The Case of $N=3$}
For $N=3$ and $\omega = \zeta_3$ we find a solution over a degree 2 extension of the field 
\[ \mathbb{Q}( \zeta_{3},\zeta_{7}, i).   \]
This degree two extension is defined via the quadratic
\[  \alpha^2+\frac{1}{1+\delta} \cdot  \alpha +\frac{1}{1+\delta}.   \]
Our solution in this case is then
\begin{align*}
P_0 &:= \omega^2 [2] \alpha \\
P_1 &= \omega \left(    \frac{3}{2} +\frac{\mathbf{i}\sqrt{7}}{2}  + \omega \mathbf{i}\sqrt{7}+\alpha \right)
\end{align*}
\subsection{The Case of $N=5$}
For $N=5$ and $\omega = \zeta_5^2$ (we include this case instead of the $\omega = \zeta_{5}$ case as the solution for $\omega = \zeta_{5}$ is somewhat degenerate) we find a solution over a degree 2 extension of the field 
\[ \mathbb{Q}( \zeta_{5},\zeta_{9}).   \]
This degree two extension is defined via the quadratic
\[  \alpha^2 + \left( \frac{1}{2}(1 - \ii \sqrt{3})  - \ii \sqrt{3} \omega - \ii \sqrt{3} \omega^2 \right) \cdot   \alpha + 1.   \]
Our solution in this case is then
\begin{align*}
P_0 := \omega \cdot &\alpha \\
P_1 = \omega^4 \Biggl( &\left( \frac{1}{2}(-3 + \ii \sqrt{3}) + \ii \sqrt{3} \omega + (2 + \ii \sqrt{3})\omega^2 + 2\omega^3   \right) \\
+&\left( \frac{1}{2}(-1 + \ii \sqrt{3}) + \ii \sqrt{3} \omega + (1 + \ii \sqrt{3})\omega^2 + \omega^3   \right) \cdot \alpha \Biggl)\\
P_2 = \omega^2 \Biggl(& \left( \frac{1}{2} \left(1+i \sqrt{3}\right) \omega^3+\frac{1}{2} \left(1+3 i \sqrt{3}\right) \omega ^2+2 i \sqrt{3} \omega +i \sqrt{3}-1   \right) \\
+&\left( \frac{1}{2} \left(5+i \sqrt{3}\right) \omega ^3+\frac{1}{2} \left(5+3 i \sqrt{3}\right) \omega ^2+2 i \sqrt{3} \omega +i \sqrt{3}-1  \right) \cdot \alpha \Biggl)
\end{align*}


\subsection{The Case of $N=7$}
For $N=7$ and $\omega = \zeta_7$ we find a solution over a degree 2 extension of the field 
\[ \mathbb{Q}( \zeta_{7},\zeta_{11},i).   \]
This degree two extension is defined via the quadratic
\[  \alpha^2 +\frac{1}{1+\delta}\left(-\omega ^5+\frac{1}{2} \left(-5+i \sqrt{11}\right) \omega ^4+\frac{1}{2} \left(-5-i \sqrt{11}\right) \omega ^3-\omega ^2-2\right)\cdot   \alpha  + \frac{1}{1+\delta}.   \]
Our solution in this case is then

\begin{align*}
P_0 := \omega^5 [2] \cdot &\alpha \\
P_1 = \omega^2 \Biggl( &\left( \omega ^5+\frac{1}{2} \left(-5-i \sqrt{11}\right) \omega ^4+\frac{1}{2} \left(-5+i \sqrt{11}\right) \omega ^3+\omega ^2-4   \right) \\
+&\left(\left(-2-i \sqrt{11}\right) \omega ^5+\left(3-i \sqrt{11}\right) \omega ^4+3 \omega ^3-2 \omega ^2-i \sqrt{11} \omega +\frac{1}{2} \left(5-i \sqrt{11}\right) \right) \cdot \alpha \Biggl)\\
P_2 = \omega^6 \Biggl(& \left(\frac{1}{2} \left(1-i \sqrt{11}\right) \omega ^5+\left(-1-i \sqrt{11}\right) \omega ^4-\omega ^3+\frac{1}{2} \left(1-i \sqrt{11}\right) \omega ^2-i \sqrt{11} \omega +\frac{1}{2} \left(-5-i \sqrt{11}\right)   \right) \\
+&\left( \frac{1}{2} \left(-1+i \sqrt{11}\right) \omega ^5+i \sqrt{11} \omega ^4+\frac{1}{2} \left(-1+i \sqrt{11}\right) \omega ^2+i \sqrt{11} \omega +\frac{1}{2} \left(-3+i \sqrt{11}\right)  \right) \cdot \alpha \Biggl)\\
P_3 = \omega^3 \Biggl(& \left( \frac{1}{2} \left(-3+3 i \sqrt{11}\right) \omega ^5+\frac{1}{2} \left(1+3 i \sqrt{11}\right) \omega ^4+\frac{1}{2} \left(1+i \sqrt{11}\right) \omega ^3+\frac{1}{2} \left(-3+i \sqrt{11}\right) \omega ^2+2 i \sqrt{11} \omega +i \sqrt{11}+2 \right) \\
+&\left(\frac{1}{2} \left(-7-3 i \sqrt{11}\right) \omega ^5+\frac{1}{2} \left(1-3 i \sqrt{11}\right) \omega ^4+\frac{1}{2} \left(1-i \sqrt{11}\right) \omega ^3+\frac{1}{2} \left(-7-i \sqrt{11}\right) \omega ^2-2 i \sqrt{11} \omega -i \sqrt{11} \right) \cdot \alpha \Biggl)
\end{align*}

\subsection{The Case of $N=9$}
For $N=9$ and $\omega = \zeta_9$ we find a solution over a degree two extension of the field 
\[ \mathbb{Q}( \zeta_{9}, \zeta_{13}).   \]
This degree two extension is defined via the quadratic
\[  \alpha^2 +\left(1-\frac{1}{2} \left(\sqrt{13}-3\right) \left(\omega ^8+\omega \right)\right)\cdot   \alpha  + 1.   \]
Our solution in this case is then
\begin{align*}
P_0 := \omega^2 \cdot &\alpha \\
P_1 = \omega^7 \Biggl( & \frac{1}{106} \left(\left(103-21 \sqrt{13}\right) \omega ^5+\left(380-96 \sqrt{13}\right) \omega ^4+\left(75 \sqrt{13}-277\right) \omega ^2+\left(277-75 \sqrt{13}\right) \omega -50 \sqrt{13}+220)   \right) \\
+&\frac{1}{106} \left(-8 \left(4 \sqrt{13}-7\right) \omega ^5-7 \left(9 \sqrt{13}-29\right) \omega ^4+\left(31 \sqrt{13}-147\right) \omega ^2+\left(147-31 \sqrt{13}\right) \omega -56 \sqrt{13}+204\right)  \cdot \alpha \Biggl)\\
P_2 = \omega^3 \Biggl(& \frac{1}{106} \left(-\left(\left(7 \sqrt{13}+1\right) \omega ^5\right)+\left(321-85 \sqrt{13}\right) \omega ^4+\left(78 \sqrt{13}-322\right) \omega ^2+\left(322-78 \sqrt{13}\right) \omega -52 \sqrt{13}+250  \right) \\
+&\frac{1}{106} \left(\left(160-46 \sqrt{13}\right) \omega ^5+\left(421-127 \sqrt{13}\right) \omega ^4+9 \left(9 \sqrt{13}-29\right) \omega ^2-9 \left(9 \sqrt{13}-29\right) \omega -107 \sqrt{13}+439\right)  \cdot \alpha \Biggl)\\
P_3 = \omega^8 \Biggl(&\frac{1}{106} \left(\left(172-68 \sqrt{13}\right) \omega ^5+\left(597-167 \sqrt{13}\right) \omega ^4+\left(99 \sqrt{13}-425\right) \omega ^2+\left(425-99 \sqrt{13}\right) \omega -119 \sqrt{13}+407\right) \\
+&\frac{1}{106} \left(\left(146-38 \sqrt{13}\right) \omega ^5+\left(410-98 \sqrt{13}\right) \omega ^4+12 \left(5 \sqrt{13}-22\right) \omega ^2+\left(264-60 \sqrt{13}\right) \omega -93 \sqrt{13}+335\right) \cdot \alpha \Biggl)\\
P_4 = \omega^4 \Biggl(&\frac{1}{106} \left(7 \left(9 \sqrt{13}-29\right) \omega ^5+\left(129 \sqrt{13}-451\right) \omega ^4+\left(248-66 \sqrt{13}\right) \omega ^2+\left(66 \sqrt{13}-248\right) \omega +97 \sqrt{13}-395\right) \\
+&\frac{1}{106} \left(\left(43 \sqrt{13}-115\right) \omega ^5+\left(189 \sqrt{13}-715\right) \omega ^4+\left(600-146 \sqrt{13}\right) \omega ^2+2 \left(73 \sqrt{13}-300\right) \omega +115 \sqrt{13}-453\right) \cdot \alpha \Biggl)
\end{align*}

For our final two examples, the solutions are too large to reasonably write down in this paper. We will just list the field extension the solutions live in. The full solutions can be found in Mathematica files attached to the arXiv submission of this paper.

\subsection{The Case of $N=11$}
For $N=13$ and $\omega = \zeta_{11}$ we find a solution over a degree two extension of the field 
\[ \mathbb{Q}( \zeta_{11},\zeta_{15},i).   \]
This degree two extension is defined via the quadratic
\begin{align*}  \alpha^2 +\frac{1}{1+\delta}\Biggl( &    \frac{1}{2} \left(1-i \sqrt{3}\right) \omega ^{10}+          (\zeta_{15}+\zeta_{15}^4)  \omega ^9+   \frac{1}{2} \left(-1-i \sqrt{3}\right) \omega ^7 -\omega ^6-\omega ^5 +\frac{1}{2} \left(-1+i \sqrt{3}\right) \omega ^4\\&+ \omega ^2 (\zeta_{15}^{11} +\zeta_{15}^{14})+\frac{1}{2} \left(1+i \sqrt{3}\right) \omega -\frac{1}{2} \left(1+\sqrt{5}\right)\Biggr)\cdot   \alpha  + \frac{1}{1+\delta}.  
\end{align*}
\subsection{The Case of $N=13$}
For $N=13$ and $\omega = \zeta_{13}$ we find a solution over a degree four extension of the field 
\[ \mathbb{Q}( \zeta_{13},\zeta_{17}).   \]
Define
\begin{align*}
c_1 :=& \frac{1}{2} \left(\sqrt{17}-7\right) \omega ^{10}+\frac{1}{2} \left(\sqrt{17}-3\right) \omega ^9-\omega ^8+\left(\sqrt{17}-3\right) \omega ^7+\left(\sqrt{17}-3\right) \omega ^6-\omega ^5+\frac{1}{2} \left(\sqrt{17}-3\right) \omega ^4\\
&+\frac{1}{2} \left(\sqrt{17}-7\right) \omega ^3+\frac{1}{2} \left(\sqrt{17}-3\right)\\
c_2:=& \left(\sqrt{17}-2\right) \omega ^{11}+\left(9-2 \sqrt{17}\right) \omega ^{10}+\frac{1}{2} \left(5 \sqrt{17}-17\right) \omega ^8+\frac{1}{2} \left(9-\sqrt{17}\right) \omega ^7+\frac{1}{2} \left(9-\sqrt{17}\right) \omega ^6 \\
&+\frac{1}{2} \left(5 \sqrt{17}-17\right) \omega ^5+\left(9-2 \sqrt{17}\right) \omega ^3+\left(\sqrt{17}-2\right) \omega ^2-2 \sqrt{17}+13.
\end{align*}
The degree four extension is then defined by the polynomial
\[  \alpha^4 + c_1 \alpha^3 + c_2 \alpha^2 + c_1\alpha + 1.     \]

\printbibliography

\end{document}